\newtheorem{theorem}{Theorem}[section]
\newtheorem{proposition}[theorem]{Proposition}
\newtheorem{lemma}[theorem]{Lemma}
\newtheorem{definition}[theorem]{Definition}
\newcommand{\conv}{\mathrm{conv}}
\newcommand{\cp}{\mathcal{P}}
\newcommand{\zz}{\mathbb{Z}}
\newcommand{\rr}{\mathbb{R}}
\begin{document}

\baselineskip=0.20in

\title{The Convex Hull of Parking Functions of Length $n$}

\author{Aruzhan Amanbayeva$^\ast$ and Danielle Wang$^\dag$}

\address{$^\ast, ^\dag$ Department of Mathematics, Massachusetts Institute of Technology, Cambridge, MA 02139-4307, USA}
\email{$^\ast$ aruzhan@mit.edu, $^\dag$ diwang@mit.edu}

\setcounter{page}{1} \thispagestyle{empty}

\begin{abstract}  
    Let $\cp_n$ be the convex hull in $\mathbb{R}^n$ of all parking
    functions of length $n$. Stanley found the number of vertices and
    the number of facets of $\cp_n$. Building upon these results, we
    determine the number of faces of arbitrary dimension, the volume,
    and the number of integer points of $\cp_n$. 

  \bigskip

  \noindent{\bf Keywords}: parking function, polytope

  \noindent{\bf 2020 Mathematics Subject Classification}: 05A15;
  52B05

\end{abstract}

\maketitle

\section{Introduction}
Let $S$ be a finite subset of $\zz^n\subset\rr^n$. When $S$ has a
combinatorial definition, there has been a lot of interest in
understanding the convex hull $\cp=\conv(S)$ in $\rr^n$. We can ask
for such information as the $f$-vector of $\cp$ (which encodes the
number of faces of each dimension), the volume, the Ehrhart polynomial
(which counts integer points in the dilation $m\cp$ where $m$ is a
positive integer), the toric $h$-vector, etc. A prototypical example
is given by taking $S$ to consist of all permutations
$(a_1,a_2,\dots,a_n)$ of $1,2,\dots,n$. Then $\conv(S)$ is the
\emph{permutohedron}, greatly generalized by Postnikov \cite{P}.

Here we take $S$ to consist of all parking functions of length $n$.
Let $\alpha=(a_1,a_2,\dots,a_n)$ be a sequence of positive integers
$a_i \in \{1,2,\dots,n\}$, and let $b_1\le b_2\le \dots \le b_n$ be
the increasing rearrangement of $\alpha$. We call $\alpha$ a
\textit{parking function} if $b_i\le i$ for all $i \in
\{1,2,\dots,n\}$. There is a vast literature on parking functions and
their connections with other areas of mathematics. For an
introduction, see Yan \cite{yan}.

We introduce an $n$-dimensional polytope $\cp_n$, defined as the convex
hull in $\mathbb{R}^n$ of all parking functions of length $n$. This
will be the central mathematical object of this paper. In particular,
we will determine the $f$-vector, the volume, and the number of
integer points of this polytope. See Figure~\ref{fig:pf3} for a
projection (Schlegel diagram) of $\cp_3$. It is combinatorially
equivalent to ``half a 3-cube,'' i.e., cut a 3-cube in half by a
hyperplane whose intersection with the cube is a regular hexagon.

\begin{figure}[h]
\centering
\centerline{\includegraphics[width=4.5cm]{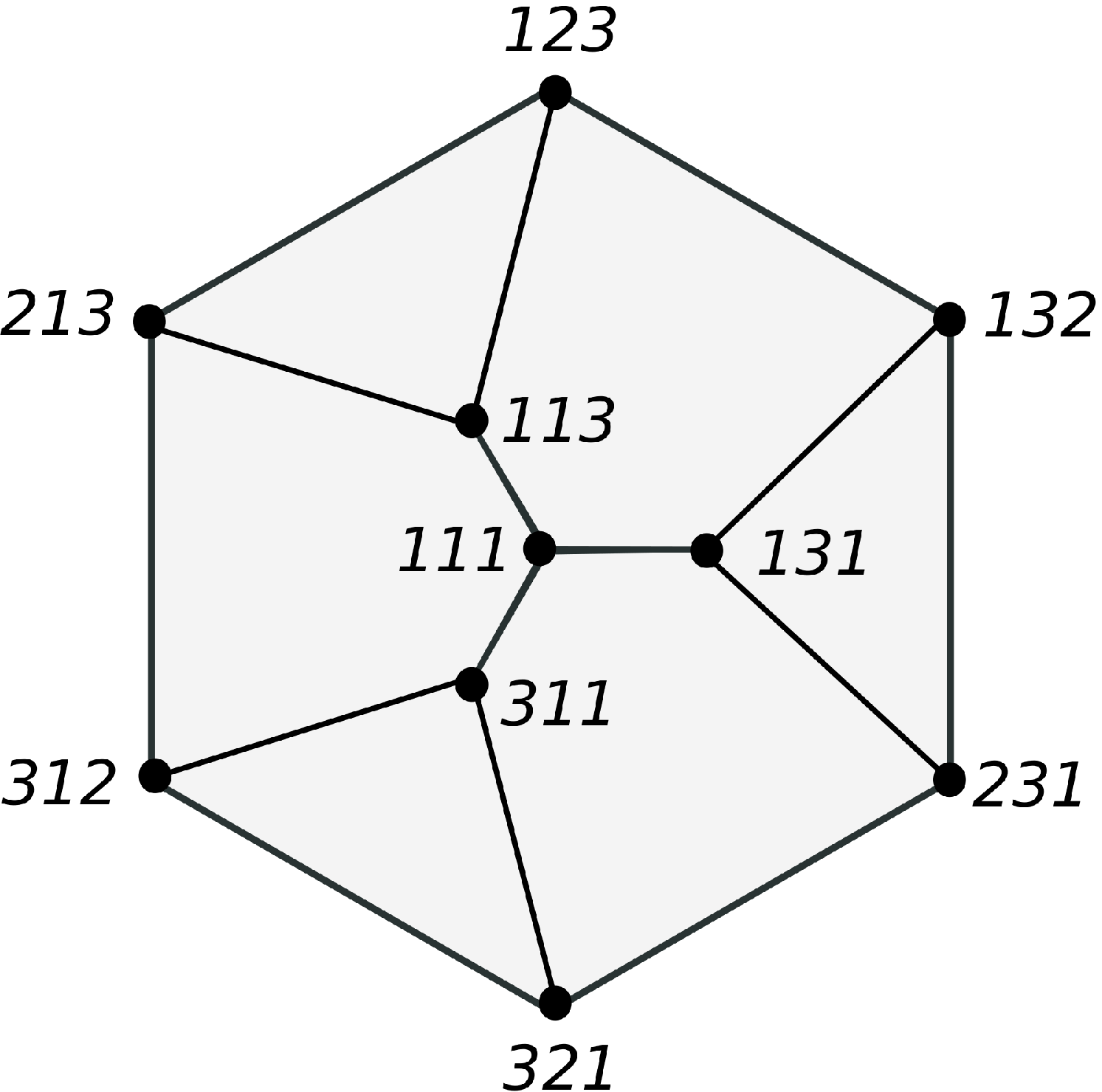}}
\caption{The polytope $\cp_3$} 
\label{fig:pf3}
\end{figure}

This paper arose from a problem proposed by Stanley in \cite{S}, which
asks to determine 
\begin{enumerate}[label=(\alph*)]
\item the number of vertices of $\cp_n$,
\item the number of $(n-1)$-dimensional faces, i.e., facets, of $\cp_n$,
\item the number of integer points in $\cp_n$, i.e., the number of
  elements of $\mathbb{Z}^n \cap \cp_n$, 
\item the $n$-dimensional volume of $\cp_n$.
\end{enumerate}

\begin{definition}
We call $F$ a face of a polytope $\cp$ if
$$F = \cp \cap \{x: c\cdot x=d\}$$
for some $c \in \mathbb{R}^n, d \in \mathbb{R}$ such that for all $x \in \cp, c\cdot x \leq d$ where the dot $\cdot$ means dot product. We call a face a vertex if it has dimension $0$, an edge if it has dimension $1$, and a facet if it has dimension $n-1$ given that $\cp$ has dimension $n$. 
\end{definition}

In a private communication with the authors, Stanley proved that the
vertices of $\cp_n$ are the permutations of 
 $$(\underbrace{1,\dots,1}_{k \text{ ones}},k+1,k+2,\dots,n),$$
for $1 \leq k \leq n$. This is proven in two parts. First, consider a
parking function $\alpha=(a_1,\dots,a_n)$ for which there is a term
$a_i>1$ such that $(a_1,\dots,a_{i-1},a_i+1,a_{i+1},\dots,a_n)$ is
also a parking function. It can be seen that $\alpha$ is a convex
combination of two other parking functions. Second, if
$\alpha=(1,\dots,1,k+1,k+2,\dots,n)$ is a convex combination of
$\beta,\gamma \in \cp_n$, then by properties of parking functions,
$\beta=\gamma=\alpha$, meaning $\alpha$ is a vertex of $\cp_n$. 

From these observations, the number of vertices of $\cp_n$ is
$$n!\left(\frac{1}{1!}+\frac{1}{2!}+\dots+\frac{1}{n!}\right).$$

Stanley also showed that the defining inequalities of $\cp_n$ are
\begin{align*}
1 \leq & x_i \leq n, \quad 1 \leq i \leq n
\\ x_i+x_j & \leq (n-1) + n , \quad i<j
\\ x_i+x_j+x_k & \leq (n-2) + (n-1) + n , \quad i<j<k
\\ \vdots
\\ x_{i_1}+x_{i_2}+\dots+x_{i_{n-2}} & \leq 3+4+\dots+ n , \quad i_1<i_2<\dots<i_{n-2}
\\ x_1+x_2+\dots+x_n & \leq 1+2+\dots+ n.
\end{align*}
Thus, the number of facets is the number of these inequalities, which
is equal to $2^n-1$. 

From these findings arose the curiosity to find the number of faces of
specified dimensions other than $0$ (i.e., vertices) and $n-1$ (i.e.,
facets). In particular, we want to find the number of $1$-dimensional
faces, i.e., edges, and more generally, the number of $i$-dimensional
faces for $0\le i \le n-1$. These numbers constitute $\cp_n$'s
$f$-vector. We define the $f$-vector of an $n$-dimensional polytope as
the vector $(f_0, f_1, \dots, f_{n-1})$, where $f_i$ is the number of
$i$-dimensional faces of the polytope.

\subsection*{Organization of the paper}
In Section~\ref{sec:edg}, we find the number of edges of $\cp_n$ by
understanding which pairs of vertices create an edge and using the
formula of the number of vertices of $\cp_n$ mentioned above. In
Section~\ref{sec:fac}, we consider the general case of $d$-dimensional
faces of $\cp_n$, determine their structure, and derive a formula for
their number which involves Stirling numbers of the second kind. In
Section~\ref{sec:vol}, we prove that the sequence $\{V_n\}$ of volumes
of $\cp_n$ satisfies a nice recurrence relation, and then use it to
find the exponential generating function of this sequence. Lastly, in
Section~\ref{sec:lat}, we show that the set of lattice points of
$\cp_n$ can be divided into sets of lattice points of several
permutohedrons, which have a formula given by Postnikov in \cite{P}. 

\subsection*{Acknowledgements}
This paper arose from a UROP+ (a research program for undergraduates)
project at M.I.T.\ undertaken by the first author and mentored by the
second.  The first author would like to thank Prof.\ David Jerison,
Prof.\ Ankur Moitra, and Dr.\ Slava Gerovitch for organizing the UROP+
program, and the Paul E. Gray (1954) UROP Fund for generously
supporting her research. Also, thanks to Prof.\ Richard Stanley for
suggesting this problem and for his guidance along the way. Finally,
the first author would like to give thanks to Daniyar Aubekerov who
helped her with coding and provided emotional support throughout this program.

\section{Edges}\label{sec:edg}

\begin{theorem}\label{thm:edges}
The number of edges of $\cp_n$ is equal to $$\frac{n\cdot
  n!}{2}\left(\frac{1}{1!}+ \frac{1}{2!}+\dots+\frac{1}{n!}\right).$$ 
\end{theorem}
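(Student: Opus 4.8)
The plan is to prove that the graph of $\cp_n$ is \emph{$n$-regular}: every vertex lies on exactly $n$ edges. Granting this, the handshake lemma gives that the number of edges is $\tfrac{n}{2}$ times the number of vertices, and since Stanley's enumeration gives $n!\bigl(\tfrac{1}{1!}+\tfrac{1}{2!}+\cdots+\tfrac{1}{n!}\bigr)$ vertices, the displayed formula drops out immediately. Thus the whole theorem reduces to the local statement that each vertex has degree exactly $n$.

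To compute degrees I would organize the vertices by \emph{type}, calling $v$ a vertex of type $k$ when it is a rearrangement of $(\underbrace{1,\dots,1}_{k},k+1,k+2,\dots,n)$; such a $v$ has exactly $k$ entries equal to $1$, omits the values $2,\dots,k$, and contains each of $k+1,\dots,n$ once. I claim that the neighbors of a type-$k$ vertex $v$ fall into three disjoint families. A single \emph{lowering} neighbor, present when $k<n$, is obtained by changing the unique entry equal to $k+1$ into a $1$, producing a vertex of type $k+1$. The \emph{raising} neighbors, present when $k>1$, are obtained by changing one of the $k$ entries equal to $1$ into the value $k$; this gives $k$ vertices of type $k-1$. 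Finally the \emph{transposition} neighbors, which stay in type $k$, are obtained by swapping the entries equal to $j$ and $j+1$ for each $j$ with $k+1\le j\le n-1$, together with the extra swap of the entries equal to $1$ and $2$ in the special case $k=1$. These families are visibly disjoint, and a short count shows they always contain exactly $n$ vertices in total (for instance $1+k+(n-k-1)=n$ when $1<k<n$, and the boundary types $k=1$ and $k=n$ also give $n$).

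Establishing that each listed pair $\{u,v\}$ really is an edge is the easier half, accomplished by exhibiting a supporting linear functional. For a lowering or raising edge, $u$ and $v$ differ in a single coordinate $i$, so one takes a functional $c$ with $c_i=0$ whose weights on the remaining coordinates force their common entries into place; then $c\cdot x$ is maximized over $\cp_n$ exactly on $\{u,v\}$, because the only two values that coordinate $i$ can take while keeping $x$ a vertex are those appearing there in $u$ and $v$. For a transposition edge one uses a generic weight vector on the positions whose optimal rearrangement is unique up to interchanging the two (consecutive) swapped values. Equivalently, in each case one checks that the midpoint $\tfrac12(u+v)$ has only the trivial convex representation in terms of vertices.

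The hard part will be the converse: showing that \emph{no other} pair of vertices spans an edge, equivalently that each vertex has degree at most $n$. For this I would use the midpoint criterion, proving that whenever $u,v$ are not related by one of the three moves above, the midpoint $\tfrac12(u+v)$ lies in the convex hull of vertices different from $u$ and $v$, so $[u,v]$ is not a face. The pairs to eliminate are: (a) two vertices of the same type that differ in at least three positions, or differ by a transposition of two values that are not consecutive among the present values (in particular the swap of a $1$ with $k+1$ when $k\ge 2$); (b) two vertices whose types differ by exactly one but which differ in at least two positions; and (c) two vertices whose types differ by at least two. Each case is handled by displaying explicit ``rerouting'' vertices—usually just two or three vertices whose convex hull already contains the midpoint, in the spirit of the identity $\tfrac13\bigl((2,1,3)+(3,1,2)+(1,1,1)\bigr)=(2,1,2)=\tfrac12\bigl((1,1,3)+(3,1,1)\bigr)$ that explains why $(1,1,3)$ and $(3,1,1)$ are not adjacent in $\cp_3$. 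Carrying out this rerouting uniformly in $n$ is the technical heart of the proof; once it is done, every vertex has degree exactly $n$ and the handshake lemma yields the stated number of edges.
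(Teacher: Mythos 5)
Your global strategy coincides with the paper's: both arguments show that $\cp_n$ is a simple polytope, i.e., that its graph is $n$-regular, and then invoke the handshake lemma together with Stanley's vertex count. Your three neighbor families (one lowering move when $k<n$, the $k$ raising moves when $k>1$, and the transpositions of consecutive values $j,j+1$ with $k+1\le j\le n-1$, plus the extra $1\leftrightarrow 2$ swap for $k=1$) are exactly the paper's classification of edges into layer-below, layer-above, and same-layer edges, with the same counts, and your positive half (exhibiting a supporting functional $c$ with a zero weight on the changed coordinate, or with one tie among otherwise generic weights) is the paper's construction almost verbatim.

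The genuine gap is the converse: the claim that no pair outside your three families spans an edge is announced but not proved. You enumerate the cases (a)--(c) and supply a single $n=3$ rerouting identity, while conceding that carrying out the rerouting uniformly in $n$ is ``the technical heart of the proof'' --- and that heart is missing. It is precisely the content of the paper's two propositions: a rearrangement-inequality argument showing adjacent vertices lie on layers differing by at most one (your case (c)), followed by a sign-and-tie analysis of the coordinates of $c$ (all $c_i<0$ for $i\le k$ when $k\ge2$, and exactly one tie, necessarily between neighboring coefficients, among $c_k,\dots,c_n$) that pins down the remaining cases. Moreover, committing to the \emph{midpoint} in your rerouting criterion is shakier than it looks: midpoint in $\conv\bigl(V\setminus\{u,v\}\bigr)$ does certify a non-edge, but the converse can fail in general --- for non-adjacent vertices one only knows that \emph{some} point of the open segment $(u,v)$ lies in $\conv\bigl(V\setminus\{u,v\}\bigr)$. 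For instance, in $\conv\{(0,0),(5,5),(4,0),(0,4)\}$ the diagonal from $(0,0)$ to $(5,5)$ is not an edge, yet its midpoint $(2.5,2.5)$ is not in $\conv\{(4,0),(0,4)\}$. So your planned case analysis would need either the segment-point version of the criterion (or the equivalent ``only trivial representation'' test, which is correct but harder to verify case by case), or, as in the paper, a direct supporting-hyperplane argument; as written, the degree upper bound, and hence the theorem, is not established.
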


\begin{definition}
Let $x$ be a parking function which is a vertex of $\cp_n$. Then it is
a permutation of $(1, \dots , 1, k+1, k+2, \dots, n)$ for some unique
$1 \le k \le n$. We say that \emph{$x$ is on layer $n - k$}. For $x=(1,1,\dots,1)$ we say that it is on layer $0$.
\end{definition}

\begin{proposition}
If $v$ and $u$ are two vertices of $\cp_n$ such that $vu$ is
an edge, then $v$ and $u$ are either from neighboring layers
(differing by $1$) or from the same layer. 
\end{proposition}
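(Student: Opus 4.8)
The plan is to prove the contrapositive: if $v$ lies on layer $\ell_1$ and $u$ on layer $\ell_2$ with $|\ell_1-\ell_2|\ge 2$, then the segment $[v,u]$ is not an edge of $\cp_n$. I would use the standard certificate for a segment to fail to be a face: if the midpoint $m=\frac{1}{2}(v+u)$ can be written as $\frac{1}{2}(p+q)$ for two points $p,q\in\cp_n$ with $p$ (and hence $q$) off the line $\{v+t(u-v):t\in\rr\}$ through $v$ and $u$, then $[v,u]$ cannot be a face, since the face-membership property would force $p,q\in[v,u]$. Thus it suffices to exhibit two parking functions $p,q$ with $p+q=v+u$ and $p$ off that line; note that $p$ off the line automatically gives $\{p,q\}\neq\{v,u\}$.

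Assume without loss of generality that $\ell_1<\ell_2$, and write $k_1=n-\ell_1$ and $k_2=n-\ell_2$ for the number of $1$'s in $v$ and $u$, so that $k_1-k_2=\ell_2-\ell_1\ge 2$. Since $v$ has exactly $k_1$ coordinates equal to $1$ while $u$ has only $k_2$ of them, at least $k_1-k_2\ge 2$ coordinates $i$ satisfy $v_i=1$ and $u_i\ge 2$; fix one such $i$ and set $p=v+e_i$ and $q=u-e_i$, where $e_i$ is the $i$-th standard basis vector, so that $p+q=v+u$ by construction. To see $q$ is a parking function, recall that $\alpha$ is a parking function iff $|\{j:a_j\le t\}|\ge t$ for every $t$; lowering a single entry (while keeping it $\ge 1$) only increases these counts, so the condition is preserved, and here $q_i=u_i-1\ge 1$. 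For $p$, I would use that $v$ is the vertex with entries $\{1^{k_1},k_1+1,\dots,n\}$: replacing one of its $1$'s by a $2$ places the new entry at position $k_1$ of the sorted order, and since $k_1\ge 2$ the inequality $2\le k_1$ holds there, while every other sorted inequality is untouched (all non-$1$ entries of $v$ are $\ge k_1+1\ge 3$); hence $p$ is a parking function.

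Finally, since $v$ and $u$ lie on different layers they differ in more than one coordinate, so $u-v$ is not a scalar multiple of $e_i$; therefore $p=v+e_i$ is off the line through $v$ and $u$, and $m=\frac{1}{2}(v+u)=\frac{1}{2}(p+q)$ exhibits an interior point of $[v,u]$ as a convex combination of points of $\cp_n$ not lying on $[v,u]$. This shows $[v,u]$ is not a face, completing the contrapositive. I expect the main obstacle to be the verification that $p=v+e_i$ remains a parking function: unlike the decrease producing $q$, \emph{raising} a coordinate can destroy the parking property in general, so the argument must lean on the rigid staircase structure of a vertex, whose non-$1$ entries are all $\ge k_1+1\ge 3$ and thereby leave exactly the room needed to absorb one new $2$.
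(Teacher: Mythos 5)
Your proof is correct, and it takes a genuinely different route from the paper's. The paper argues \emph{dually}: it takes a linear functional $c$ certifying the putative edge ($c\cdot v=c\cdot u>c\cdot w$ for all other vertices $w$), sorts $c$ without loss of generality, uses the rearrangement inequality to force $v=(1,\dots,1,k,k+1,\dots,n)$ and $u=(1,\dots,1,k+t,\dots,n)$ in sorted form with $c_{k-1}<c_k<\dots<c_n$, and then splits on the sign of $c_{k+t-1}$: if $c_{k+t-1}\geq 0$ the intermediate-layer vertex $(1,\dots,1,k+t-1,k+t,\dots,n)$ satisfies $c\cdot w\geq c\cdot u$, and if $c_{k+t-1}<0$ one gets $c\cdot v<c\cdot u$ directly; either way a contradiction. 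You argue \emph{primally}: you split $v+u=p+q$ with $p=v+e_i$, $q=u-e_i$ both parking functions and $p$ off the line $vu$, so the midpoint of $[v,u]$ is a midpoint of points of $\cp_n$ that a supporting hyperplane would force onto $[v,u]$ --- a clean and standard face-certificate, and your verification of the delicate step (that \emph{raising} a coordinate of $v$ preserves the parking property, using $k_1\geq k_2+2\geq 3$ so the new entry $2$ sits at sorted position $k_1$ with room to spare) is complete. What each buys: your argument is more elementary and self-contained, needing only the count characterization of parking functions and no functional, no sorting reduction, no rearrangement inequality; the paper's functional machinery, by contrast, is reused verbatim in Proposition 2.3 (simplicity of $\cp_n$) and the face classification of Section 3, so it earns its keep later. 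One wording slip worth fixing: your final justification ``since $v$ and $u$ lie on different layers they differ in more than one coordinate'' is false as stated for adjacent layers (e.g., $(1,1,3)$ and $(1,2,3)$ differ in one coordinate); the correct reason, which you already established two sentences earlier, is that the counts of ones differ by $k_1-k_2\geq 2$, so $u-v$ is nonzero in at least two coordinates and hence cannot be a scalar multiple of $e_i$.
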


\begin{proof}
If $vu$ is an edge, then there exists $c$
such that $c\cdot v=c\cdot u > c\cdot w$ for any vertex $w$ of
$\cp_n$ distinct from $v$ and $u$. Since $\cp_n$ is invariant
under coordinate permutation, without loss of generality, we may
assume $c_1\leq \dots \leq c_n$. 

Suppose $v$ and $u$ are $t\geq 2$ layers apart from each other, so
let $v$ be a permutation of $(1,\dots,1,k,k + 1,\dots,n)$ and let
$u$ be a permutation of $(1,\dots,1,k + t,k + t+1,\dots,n)$, where
$1\leq k<k+2\leq k+t\leq n$. Since $v$ and $u$ are the unique
permutations of $(1,\dots,1,k,k + 1,\dots,n)$ and $(1,\dots,1,k,k +
1,\dots,n)$, respectively, that maximize $c\cdot x$, then, by the
rearrangement inequality, $$v=(1,\dots,1,k,k +
1,\dots,n),\ u=(1,\dots,1,k + t,k + t+1,\dots,n),$$ and
$c_{k-1}<c_k<\dots<c_n$. If $c_{k+t-1}\geq0$, then for
$w=(1,\dots,1,k+t-1,k+t\dots,n) \in \cp_n$ which is distinct from
$v$ and $u$, we have $c\cdot w\geq c\cdot u$, a
contradiction. Otherwise, if $c_{k+t-1}<0$, we have
$c_k<\dots<c_{k+t-1}<0$, so $$c\cdot v-c\cdot
u=c_k(k-1)+c_{k+1}k+\dots+c_{k+t-1}(k+t-2)<0,$$ meaning $c\cdot v
< c\cdot u$, a contradiction. Thus, $v$ and $u$ are at most one
layer apart from each other. 
\end{proof}

\begin{proposition}\label{prop:nedg}
For each vertex $v$ of $\cp_n$, there are exactly $n$ edges of $\cp_n$
with $v$ as one of the vertices. Equivalently, $\cp_n$ is a
\emph{simple} polytope. 
\end{proposition}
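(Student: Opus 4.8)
The plan is to prove the equivalent assertion that $\cp_n$ is \emph{simple}, namely that every vertex lies on exactly $n$ facets; the statement about edges then follows from standard polytope theory, since a vertex of an $n$-dimensional polytope lying on exactly $n$ facets must have linearly independent facet normals (otherwise those $n$ hyperplanes could not cut out the single point $v$), hence is a simple vertex incident to precisely $n$ edges. Throughout I would work with the facet description recalled in the introduction: the facets are the lower bounds $x_i \geq 1$ and the upper bounds $x_{i_1} + \dots + x_{i_j} \leq (n-j+1) + \dots + n$ over index sets of size $j$ with $1 \leq j \leq n-2$ or $j = n$ (the sizes $j = n-1$ cut out codimension-$2$ faces, not facets).

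Because $\cp_n$ is invariant under permuting coordinates, it is enough to treat the canonical vertex $v$ on layer $n-k$ with $v_i = 1$ for $i \leq k$ and $v_i = i$ for $i > k$, and to show that exactly $n$ facets pass through it. The lower bound $x_i \geq 1$ is active precisely when $v_i = 1$, that is, for the $k$ indices $i \leq k$, contributing exactly $k$ facets with normals $e_1, \dots, e_k$.

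For the upper bounds I would use that $\sum_{i \in S} x_i \leq c_{|S|}$, with $c_j := (n-j+1) + \dots + n$, is active at $v$ exactly when $\sum_{i \in S} v_i$ attains the maximum of this sum over all $|S|$-element index sets. When $j = |S| \leq n-k$ that maximum is achieved uniquely by the $j$ positions carrying the largest entries of $v$, i.e. $S = \{n-j+1, \dots, n\}$; when $n-k < j \leq n-2$ a direct estimate shows the maximum falls strictly short of $c_j$ (the deficit $c_j - \max$ is a sum of consecutive integers whose top term is $k \geq 2$, hence exceeds its own number of terms), so no facet of this size is active; and the full sum $j = n$ is active iff $k = 1$. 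Consequently the active upper-bound facets form the nested chain $\{n\} \subset \{n-1,n\} \subset \dots \subset \{k+1, \dots, n\}$, with the non-facet size $n-1$ omitted and, in the exceptional case $k=1$, the full-simplex facet appended in its place; in every case this yields exactly $n-k$ upper bounds.

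Thus $v$ meets exactly $k + (n-k) = n$ facets. Their normals, namely $e_1, \dots, e_k$ together with the partial sums $e_{n-j+1} + \dots + e_n$, form a triangular system supported on complementary blocks of coordinates (with the $k=1$ case, where the full-sum normal supplies the one missing coordinate, checked separately), so they are linearly independent; hence $v$ is simple and has exactly $n$ edges. Since $v$ was an arbitrary vertex, $\cp_n$ is simple, proving the proposition. I expect the main difficulty to be the bookkeeping in the upper-bound count: one must confirm that each admissible size has a \emph{unique} active index set, and that the omission of size $n-1$ and the appearance of the full-sum facet exactly when $k=1$ always balance so that the upper-bound total remains $n-k$ — a matching that also warrants a separate check for small $n$.
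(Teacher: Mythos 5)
Your proof is correct, but it takes a genuinely different route from the paper's. The paper works entirely on the vertex side: for each candidate neighbor $u$ of $v$ it analyzes a supporting functional $c$ with $c\cdot v=c\cdot u$ maximal, first proving that an edge can only join vertices at most one layer apart, and then counting, layer by layer, that $v=(1,\dots,1,k+1,\dots,n)$ has exactly $n-k-1$ same-layer neighbors (adjacent transpositions in the increasing tail), $k$ neighbors one layer up, and $1$ neighbor one layer down, with the boundary cases $k=1$ and $k=n$ absorbing the missing types, and with each edge realized by an explicit choice of $c$. You instead work on the facet side: using Stanley's $H$-description you verify that $v$ lies on exactly the $k$ lower bounds $x_i\ge 1$ and the $n-k$ tight upper bounds given by the nested chain of top-coordinate index sets, and then invoke the standard equivalence between ``lies on exactly $n$ facets'' and ``lies on exactly $n$ edges.'' Your bookkeeping checks out: the maximizing index set of each size $j\le n-k$ is unique because the top $j$ entries of $v$ are distinct and strictly exceed the rest; your deficit bound for $n-k<j\le n-2$ is right (though the clean reason each term of the deficit sum exceeds $1$ is that its bottom term is $n-j+1\ge 3$ when $j\le n-2$, not merely that its top term is $k$); the full sum is tight iff $k=1$, exactly compensating the excluded size $n-1$; and the $n$ active normals are independent, as they must be for $n$ constraints to cut out a vertex. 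Two dependencies distinguish the approaches: your argument leans on the quoted inequality list being exactly the irredundant facet list (in particular that size-$(n-1)$ sums are \emph{not} facets), which the paper cites from Stanley without proof, and on the imported facet/edge equivalence for simple vertices, whereas the paper's count is self-contained given the vertex description. In exchange, your route is shorter and identifies the $n$ facets through each vertex, while the paper's route exhibits the actual neighbors of each vertex --- the edge graph itself --- and its supporting-functional technique is the one the authors reuse for the higher-dimensional face counts in Section~\ref{sec:fac}.
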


\begin{proof}
Suppose $v$ is on layer $n-k$. Since $\cp_n$ is invariant under
coordinate permutation, without loss of generality, we may assume
$v=(1,\dots,1,k+1,\dots,n)$. Let $vu$ be an edge of $\cp_n$,
then there exists $c\in \mathbb{R}^n$ such that $c\cdot v=c\cdot
u > c\cdot w$ for any vertex $w$ of $\cp_n$ distinct from $v$ and
$u$. By the rearrangement inequality, $c_i \leq c_{k+1} \leq \dots
\leq c_n$ for any $1 \leq i \leq k$.

If $u$ is on the same layer as $v$, then $u$ is a permutation
of $(1,\dots,1,k+1,\dots,n)$ distinct from $v$. If $c_{k+1}\leq 0$, then changing the
$(k+1)$-st coordinate of $v$ from $k+1$ to $1$ will give another
vertex $w$ of $\cp_n$ for which $c\cdot w \geq c\cdot v$, a
contradiction. Thus, $0<c_{k+1}\leq \dots \leq c_n$. If $2 \leq k \leq
n$ and $c_i\geq 0$ for some $1 \leq i \leq k$, then changing the
$i$-th coordinate of $v$ from $1$ to $k$ will give another vertex
$w$ of $\cp_n$ for which $c\cdot w \geq c\cdot v$, a
contradiction. Thus, $c_i<0$ for $1\leq i \leq k$ if $k\geq 2$. This means for $k \ge 2$, we have $u_1=\dots=u_k=1$.

Also, we have at most one pair of equal coefficients among $c_k, \dots, c_n$. Otherwise, by interchanging the corresponding coordinate values of $v$ we would get a total of $\ge 3$ distinct vertices $x$ of $\cp_n$ (including $v$) for which $cx=cv=cu$, a contradiction. At the same time if we have no such pairs, then $c_k < c_{k+1} < \dots < c_n$, and then $cv>cu$, a contradiction. Therefore, we have exactly one pair of equal coefficients among $c_k, \dots, c_n$, and since $c_k \le \dots \le c_n$, they have to be neighboring. This means $u$ differs from $v$ by exactly
one swap of two neighboring coordinates $(j,j+1)$ where $k+1 \leq j
\leq n-1$ for $2\leq k\leq n-1$, and $k = 1 \leq j \leq n-1$ for $k=1$. Hence, there are at most $n-k-1$  same layer edges with $v$ if $2 \le k \le n-1$, at most $n-1$ same layer edges with $v$ if $k=1$, and $0$ same layer edges with $v$ if $k=n$. 

In fact, each of these edges can be achieved by choosing $c$ the following way. For $k=1$, let
$$0<c_1 <\dots<c_j=c_{j+1}< \dots < c_n \text{ for some } k=1\leq j\leq n-1.$$
For $k\geq 2$, let
$$c_1 = \dots = c_k < 0<c_{k+1}< \dots <c_j=c_{j+1}<\dots< c_n \text{ for some } k+1\leq
j\leq n-1.$$

Suppose $u$ is $1$ layer apart from $v$. Then $x=v$ is the only
permutation of $(1,\dots,1,k+1,\dots,n)$ maximizing $c\cdot x$. Then $c_i<c_{k+1}<\dots<c_n$ for any $1\leq i \leq k$. Therefore, if $k\geq 2$ and $u$ is a permutation of $(1,\dots,1,k,k+1,\dots,n)$, then $(u_{k+1},\dots,u_{n})=(k+1,\dots,n)$ and thus $(u_1, \dots, u_k)$ is one of the $k$ permutations of $(1,\dots,1,k)$. Hence, there are at most $k$ edges $vu$ with $u$ one layer above $v$ (i.e., on layer $n-k+1$) for $k \ge 2$. In fact, each of these edges can be achieved by choosing $c$ such that $c_i<0$ for indices $1\leq i \leq k$ with $u_i=1$, $c_i=0$ for the index $1\leq i \leq k$ with $u_i=k$, and $0<c_{k+1}<\dots c_n$. Note that if $k=1$, then $v$ is on the highest layer (layer $n-1$), so there are no edges $uv$ such that $u$ is $1$ layer above $v$.

Again, since $c_i<c_{k+1}<\dots<c_n$, for any $1\leq i \leq k$, we have that if $k<n$ and $u$ is a permutation of $(1,\dots,1,k+2,\dots,n)$ then it has to be exactly $(1,\dots,1,k+2,\dots,n)$. Hence, there is at most $1$ edge $vu$ such that $u$ is one layer below $v$ (i.e. on layer $n-k-1$) for $k<n$. In fact, this edge can be achieved by choosing $c$ such that $c_i<0$ for $1\leq i \leq k$ and $c_{k+1}=0$. Note that if $k=n$, then $v$ is on the lowest layer (layer $0$), so there are no edges $uv$ such that $u$ is $1$ layer below $v$.

Thus, adding up $u$-on-same-layer, $u$-layer-above, and $u$-layer-below edges $vu$, we get that for $2\leq k \leq n-1$, there are $(n-k-1)+k+1=n$ edges with
$v$ as one of the vertices. For $k=1$, there are $(n-k)+0+1=n$ edges
with $v$ as one of the vertices. For $k=n$, there are $0+k+0=n$
edges with $v$ as one of the vertices. 
\end{proof}

\begin{proof}[Proof of Theorem~\ref{thm:edges}]
By Proposition~\ref{prop:nedg}, the graph of $\cp_n$ is an $n$-regular
graph
with $$V=n!\left(\frac{1}{1!}+\frac{1}{2!}+\dots+\frac{1}{n!}\right)$$
vertices. Therefore, $\cp_n$ has $\frac{nV}{2}$ edges. 
\end{proof}

\section{Faces of higher dimensions}\label{sec:fac}

In this section, we generalize this approach to understand the nature
of faces of higher dimension. More specifically, we will prove the
following theorem. 

\begin{theorem}\label{thm:faces}
Let $f_{n-s}$ be the number of $(n-s)$-dimensional faces of $\cp_n$ for $s$ from $0$ to $n$. Then,
$$f_{n-s}=\sum_{m=0,m\neq1}^s \binom{n}{m} \cdot (s-m)! \cdot S(n-m+1,s-m+1),$$
where $S(n,k)$ are the Stirling numbers of the second kind.
\end{theorem}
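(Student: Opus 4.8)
The plan is to characterize each face of $\cp_n$ by the set of vertices it contains, and to organize this data combinatorially. My strategy builds directly on the edge analysis: just as Proposition~\ref{prop:nedg} showed that an edge is determined by a cost vector $c$ whose sorted coordinates have exactly one tie among the ``large'' entries, I expect a general $(n-s)$-dimensional face to correspond to a cost vector $c$ whose sorted entries partition into blocks, where the block structure encodes the dimension. First I would fix a maximizing direction $c$ and, using the coordinate-permutation symmetry of $\cp_n$, assume $c_1 \le c_2 \le \dots \le c_n$. The key observation is that the set of vertices maximizing $c \cdot x$ is controlled by (i) which of the $c_i$ are negative (these force the corresponding coordinates of every optimal vertex to equal $1$), and (ii) the pattern of equalities among the nonnegative $c_i$, since within a block of equal coordinates we are free to permute coordinate values. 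The dimension of the resulting face should equal the number of free permutation directions, which I would compute from the block sizes.

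The main combinatorial step is to turn this block description into the stated sum. I would let $m$ count the number of coordinates $i$ with $c_i < 0$ (equivalently, the number of coordinates forced to be $1$ beyond what the block structure alone dictates); the factor $\binom{n}{m}$ then chooses which of the $n$ coordinates carry these strictly negative costs. The remaining $n-m$ coordinates, together with the ``floor value $1$'' that the negative block collapses to, should be governed by an ordered set partition: the blocks of equal $c_i$ among the nonnegative entries determine how the available values are grouped, and permuting values within a block does not change $c\cdot x$, which is exactly what produces the freedom measured by the face dimension. This is where the Stirling number $S(n-m+1, s-m+1)$ and the factor $(s-m)!$ enter: I expect $S(n-m+1, s-m+1)$ to count the relevant set partitions into blocks (the $+1$ accounting for the collapsed negative/one-block), and $(s-m)!$ to order those blocks consistently with the increasing order of the $c_i$. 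The excluded value $m = 1$ reflects that a single strictly-negative coordinate cannot occur in isolation: as seen already in the edge proof, if $k \ge 2$ then all $k$ of the bottom coordinates must be strictly negative together, so there is no valid configuration with exactly one negative coordinate.

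Concretely, the steps I would carry out are: first, prove a structural lemma stating that every $(n-s)$-face of $\cp_n$ arises from a unique normalized block pattern $(\text{sign data}, \text{ordered partition})$ up to coordinate permutation, and conversely that each such pattern is realizable by an explicit choice of $c$ (mirroring the explicit constructions of $c$ in Proposition~\ref{prop:nedg}). Second, relate the dimension of the face to the block data: a face with $s$ tied blocks among the relevant coordinates has codimension $s$, so I must verify that ``dimension $n-s$'' corresponds precisely to the index of summation. Third, count the patterns: for each fixed number $m$ of strictly-negative coordinates, multiply the choice $\binom{n}{m}$ of their positions by the number $(s-m)! \cdot S(n-m+1, s-m+1)$ of ordered set partitions of the remaining data into the correct number of blocks, then sum over $m \neq 1$.

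The hard part will be pinning down the exact dictionary between the geometry (face dimension) and the combinatorics (block sizes and the role of the collapsed ``$1$'' coordinate), especially justifying the shift by $+1$ in both arguments of the Stirling number and confirming that the $m = 1$ term must be omitted rather than merely vanishing. I would expect to need a careful affine-dimension computation: fixing the optimal face, I would show its vertices span an affine space whose dimension equals the number of within-block transpositions available, and check this against small cases ($\cp_3$, whose $f$-vector I can read off from the ``half-cube'' description, and the edge count $f_{n-1}$ from Theorem~\ref{thm:edges}) to make sure the indices align before committing to the general argument.
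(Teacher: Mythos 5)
Your overall strategy coincides with the paper's: classify faces by the sign/equality pattern of the maximizing functional $c$, i.e., an ordered partition $(B_{-1},B_0,B_1,\dots,B_k)$ of the coordinates into a negative block, a zero block, and increasing positive blocks; show the face has dimension $n-k-l_{-1}$ where $l_{-1}=|B_{-1}|$; and count the admissible patterns with ordered set partitions and Stirling numbers. But your explanation of the excluded term $m=1$ --- precisely the point you flagged as the hard part --- is wrong. You claim that a configuration with exactly one strictly negative coordinate cannot occur. It can: $c=(-1,1,2,\dots,n-1)$ has $m=1$ and exposes the vertex $(1,2,\dots,n)$, and in the paper's normalization this \emph{is} the canonical pattern for that face (all $n!$ vertices that are permutations of $(1,2,\dots,n)$ have canonical patterns with $m=1$). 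The fact you cite from the edge proof --- that all $k$ bottom coefficients must be negative together when $k\ge 2$ --- concerns vertices with $k\ge 2$ ones and does not forbid $m=1$.

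The actual mechanism is a redundancy in the face-to-pattern correspondence, not a nonexistence statement, and it hinges on the special role of the zero block (which your sketch flattens into "permute within equal blocks"; on $B_0$ the optimal vertices have the additional freedom of values $(1,\dots,1,j+1,\dots,l_{-1}+l_0)$ for varying $j$, not just permutations of a fixed multiset). Two degenerate pattern types define faces already defined by other patterns: if $l_{-1}=0,\ l_0=1$, the lone zero coefficient can be changed to $-1$ without altering the optimal vertex set, and likewise if $l_{-1}=l_0=0,\ l_1=1$ the lone smallest positive coefficient can be made negative. These must be excluded both for uniqueness and because the dimension formula $n-k-l_{-1}$ is valid only for canonical patterns (a degenerate pattern's naive dimension is off by one). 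At level $s$ the excluded degenerate patterns number $n\cdot s!\,S(n-1,s)+n\cdot(s-1)!\,S(n-1,s-1)$, which coincides exactly with the would-be $m=1$ summand $\binom{n}{1}\bigl((s-1)!\,S(n-1,s-1)+s!\,S(n-1,s)\bigr)$, so after subtraction the sum runs over $m\neq 1$: the genuine $m=1$ faces are numerically carried by the degenerate $m=0$ patterns. Finally, the $+1$ shifts in $S(n-m+1,s-m+1)$ do not encode a "collapsed negative/one-block''; they come from merging the cases $l_0=0$ and $l_0\ge 1$ via $(s-m)!\,S(n-m+1,s-m+1)=(s-m)!\,S(n-m,s-m)+(s-m+1)!\,S(n-m,s-m+1)$. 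As written, your step-one lemma ("every face arises from a unique normalized pattern with $m\neq1$'') is false, so the gap is genuine --- even though, by the cancellation just described, a count that neither excludes the degenerate patterns nor includes $m=1$ would accidentally land on the correct number.
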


For each $c \in \mathbb{R}^n$, let $F_c$ be the set of points $x \in
\cp_n$ such that $c \cdot x$ is maximized (for $x \in \cp_n$). Each
face of $\cp_n$ is equal to $F_c$ for some $c \in \mathbb{R}^n$. Also,
denote the set of vertices of $\cp_n$ lying in $F_c$ by $V(F_c)$. 

For each $c$, define an ordered partition $(B_{-1}, B_0, \dots, B_k)$ of $\{1,2,\dots,n\}$, where $B_{-1}$ is the set of indices $i$ such that $c_i < 0$, $B_0$ is the set of indices $i$ such that $c_i = 0$, and $B_j$ is the set of indices $i$ such that $c_i$ is the $j$-th smallest positive value among the coordinates of $c$. Let $l_j=|B_j|$ for $j=-1,0,1,\dots,k$.

\begin{lemma}
The face $F_c$ is determined by the ordered partition $(B_{-1} , B_0 \dots, B_k)$ described above. Each face of $\cp_n$ can be uniquely defined by an ordered partition $(B_{-1}, B_0, \cdots, B_k)$ that does not satisfy $l_{-1}=0, l_0=1$ or $l_{-1}=0, l_0=0, l_1 = 1$.
\end{lemma}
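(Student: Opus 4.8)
The plan is to describe, for a fixed $c$, exactly which vertices of $\cp_n$ lie in $V(F_c)$, and to show that this set — and hence $F_c=\conv(V(F_c))$ — depends only on the ordered partition $(B_{-1},B_0,\dots,B_k)$. Using the rearrangement inequality as in Section~\ref{sec:edg}, a vertex on layer $n-k$ (a permutation of $(1,\dots,1,k+1,\dots,n)$) can maximize $c\cdot x$ only if larger coordinate values are paired with larger $c_i$. First I would compute, as in Proposition~\ref{prop:nedg}, the effect on the maximum of $c\cdot x$ of passing from layer $n-k$ to layer $n-k-1$; this change is governed by the sign of the $(k+1)$-st smallest coordinate of $c$, so the optimum increases while that coordinate is negative, stays constant while it is $0$, and decreases once it is positive. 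Consequently the optimal layers are exactly those $n-k$ with $l_{-1}\le k\le l_{-1}+l_0$ and $k\ge 1$, degenerating to the single layer $k=1$ in the strictly positive case $l_{-1}=l_0=0$. From this I would read off the shape of every maximizing vertex: each index of $B_{-1}$ is forced to value $1$; the indices of $B_1,\dots,B_k$ receive the top $m=l_1+\dots+l_k$ values $\{n-m+1,\dots,n\}$ with higher blocks getting larger values and free permutation inside each block; and the indices of $B_0$ receive the remaining ``middle'' values in any order, the number of $1$'s among them ranging over the admissible layers. Since this refers only to the blocks and their order, not to the magnitudes of the entries of $c$, it proves that $F_c$ is determined by the ordered partition.

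For the second statement I would first note that every face equals some $F_c$ and hence corresponds to some ordered partition, so I must show each partition can be replaced by an admissible one defining the same face. The two forbidden types are precisely those in which a single index is forced to take value $1$ in every maximizing vertex without lying in $B_{-1}$: if $l_{-1}=0,\ l_0=1$ the unique zero index is always a ``one,'' and if $l_{-1}=0,\ l_0=0,\ l_1=1$ the unique smallest-positive index always receives value $1$. In either case I would move that index into $B_{-1}$; the layer analysis shows $V(F_c)$ is unchanged, while the new partition has $l_{-1}=1$ and is therefore admissible. Since the two forbidden conditions are mutually exclusive and each reduction raises $l_{-1}$ to $1$, the reduction is well defined and terminates in a single step.

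The core of the argument, and the main obstacle, is injectivity: distinct admissible partitions must yield distinct faces, which I would prove by reconstructing the partition from $V(F_c)$. I would recover $B_{-1}$ as the set of indices equal to $1$ in \emph{every} vertex of $V(F_c)$; the key verification is that, once the forbidden cases are excluded, no index of $B_0$ and no index of $B_1$ is always $1$, so this set is exactly $B_{-1}$. I would then isolate the strictly positive case $l_{-1}=l_0=0$, detected by $B_{-1}=\emptyset$ together with the maximal number of $1$'s appearing in a single vertex being $1$; there $B_0=\emptyset$ and only the positive blocks remain. In every other case there is a well-defined top layer $k=l_{-1}+l_0\ge 1$, and for any vertex $x^{*}$ attaining it one has $\{i:x^{*}_i=1\}=B_{-1}\cup B_0$, which recovers $B_0$. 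Finally the ordered blocks $B_1,\dots,B_k$ are read off from the block-monotonicity of the values on the remaining indices: $i$ and $i'$ lie in the same block iff both orders $x_i<x_{i'}$ and $x_i>x_{i'}$ occur among vertices of $V(F_c)$, and otherwise their common order determines the order of their blocks.

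The delicate points to get right are the interaction between the ``always $1$'' indices and the smallest-value block — which is exactly what forces the two excluded conditions — and the separate treatment of the strictly positive case, where the smallest positive block $B_1$ may legitimately take the value $1$ and would otherwise be mistaken for $B_0$. Verifying that these two phenomena account for \emph{all} coincidences, so that the admissibility conditions cut the fibers down to singletons, is the heart of the proof.
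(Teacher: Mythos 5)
Your proposal is correct and follows essentially the same route as the paper: you characterize the maximizing vertex set $V(F_c)$ via the rearrangement inequality (with the admissible layers given by $l_{-1}\le j \le l_{-1}+l_0$, clamped at $j\ge 1$), you eliminate the two forbidden partition types by moving the forced always-$1$ index into $B_{-1}$ without changing $V(F_c)$, and you conclude uniqueness from the vertex-set description. The only difference is one of detail rather than method: where the paper simply asserts that distinct admissible partitions give distinct sets $V(F_c)$, you spell this out by explicitly reconstructing $(B_{-1},B_0,\dots,B_k)$ from $V(F_c)$ (always-$1$ indices, top-layer vertex, and the same-block criterion via both orders occurring), which is a correct elaboration of the step the paper leaves implicit.
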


\begin{proof}
Consider a vertex $v$ of $\cp_n$ that maximizes $c\cdot v$. By the
rearrangement inequality and the structure of vertices of $\cp_n$, it
is clear that $v_i=1$ for $i \in B_{-1}$. Also, $(v_i)_{i \in B_0}$ is
a permutation of $(1,\dots1, j+1, \dots, l_{-1}+l_0)$ for some $j\in
[l_{-1},l_{-1}+l_0]$, and $(v_i)_{i \in B_i}$ is a permutation of
$(l_{-1}+l_0+\dots+l_{i-1}+1, l_{-1}+l_0+\dots+l_{i-1}+2, \dots,
l_{-1}+l_0+\dots+l_{i-1}+l_i)$ for each $i$ from $1$ to $k$.  

From this conclusion, if $l_{-1}=0$ and $l_0=1$, we can change the
zero coordinate of $c$ to $-1$, and the set $V(F_c)$ will not
change. Also, if $l_{-1}=0$, $l_0=0$, and $B_1=\{i\}$, we can change
the value of $c_i$ to $-1$, and $V(F_c)$ will not change. So we do not
consider $(B_{-1},B_0,\dots,B_k)$ with $l_{-1}=0$ and $l_0=1$ or
$l_{-1}=0$, $l_0=0$, and $l_1=1$. Other than that, from the conclusion
of the previous paragraph, different ordered partitions define
different $V(F_c)$'s. 
\end{proof}

\begin{lemma}
The dimension of $F_c$ is equal to $n-k-l_{-1}$.
\end{lemma}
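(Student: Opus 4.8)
The plan is to show that $F_c$ factors as a Cartesian product of lower-dimensional polytopes, one for each block of the partition $(B_{-1}, B_0, \dots, B_k)$, and then to add up the dimensions of the factors. By the previous lemma, a vertex $v \in V(F_c)$ is obtained by choosing, independently across blocks: the value $1$ on every coordinate in $B_{-1}$; an arbitrary permutation of $(1,\dots,1,j+1,\dots,l_{-1}+l_0)$ (for some $j \in [l_{-1}, l_{-1}+l_0]$) on the coordinates in $B_0$; and an arbitrary permutation of the consecutive block $(l_{-1}+\dots+l_{i-1}+1,\dots,l_{-1}+\dots+l_i)$ on the coordinates in $B_i$, for each $1 \le i \le k$. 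Since these choices are independent, the vertex set $V(F_c)$ is the Cartesian product of the corresponding vertex sets in the coordinate subspaces $\rr^{B_{-1}}, \rr^{B_0}, \dots, \rr^{B_k}$. Using $\conv(A\times B)=\conv(A)\times\conv(B)$, I would conclude
$$F_c = \{\mathbf{1}\} \times Q_{B_0} \times \Delta_{B_1} \times \dots \times \Delta_{B_k},$$
where $\mathbf{1}$ is the all-ones point on $B_{-1}$, each $\Delta_{B_i}$ is the permutohedron spanned by the permutations of $l_i$ distinct consecutive integers on $B_i$, and $Q_{B_0}=\conv(V(F_c)|_{B_0})$ collects all configurations allowed on $B_0$. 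Hence $\dim F_c = \dim Q_{B_0} + \sum_{i=1}^k \dim \Delta_{B_i}$.

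Next I would record the dimensions of the factors. The $B_{-1}$ factor is a single point, contributing $0$. Each $\Delta_{B_i}$ is an ordinary permutohedron on $l_i$ distinct values, which fills the hyperplane of constant coordinate sum, so $\dim \Delta_{B_i}=l_i-1$ (including the degenerate case $l_i=1$, a point of dimension $0$). The one nontrivial computation is $\dim Q_{B_0}$, which I claim equals $l_0$; granting this, the total is
$$\dim F_c = l_0 + \sum_{i=1}^k (l_i-1) = l_0 + (l_1+\dots+l_k) - k = (n-l_{-1}) - k,$$
as desired, using $n = l_{-1}+l_0+l_1+\dots+l_k$. The boundary case $l_0=0$ is handled separately, since then $Q_{B_0}$ is a point contributing $0=l_0$.

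The main obstacle, and the step where the hypotheses of the previous lemma enter, is proving that $Q_{B_0}$ is full-dimensional in $\rr^{B_0}$. To do this I would exhibit $l_0+1$ affinely independent vertices. Among the configurations allowed on $B_0$ are the all-ones point (every coordinate equal to $1$) and, for each coordinate $i \in B_0$, the point $p_i$ carrying the single top value $l_{-1}+l_0$ at position $i$ and $1$ elsewhere. The differences $p_i-p_{i'} = (l_{-1}+l_0-1)(e_i-e_{i'})$ span an $(l_0-1)$-dimensional space inside the hyperplane $\{\sum_{i\in B_0}x_i = l_{-1}+2l_0-1\}$, provided $l_{-1}+l_0>1$, while the all-ones point has coordinate sum $l_0\neq l_{-1}+2l_0-1$ under the same condition, so it lies off that hyperplane and supplies the final dimension. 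The condition $l_{-1}+l_0>1$ fails only when $(l_{-1},l_0)=(0,1)$ — precisely the partition $l_{-1}=0,\ l_0=1$ excluded in the previous lemma — so for every partition that genuinely defines a face the argument goes through, which is exactly why that degenerate case must be discarded.
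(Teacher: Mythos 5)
Your proof is correct, and it takes a genuinely different route from the paper's. The paper argues by brute-force linear algebra on the affine hull: it builds an explicit set $S$ of $n-l_{-1}-k+1$ vectors --- the consecutive-difference vectors $e_i-e_{i'}$ within each positive block $B_j$, the unit vectors $e_i$ for $i\in B_0$, and one vertex $w$ --- and shows $S$ is a basis of the linear span $W$ of $V(F_c)\cup\{0\}$, whence $\dim F_c+1=|S|$. Your factorization $F_c=\{\mathbf{1}\}\times Q_{B_0}\times\Delta_{B_1}\times\dots\times\Delta_{B_k}$ packages the same directions structurally: the paper's difference vectors are tangent directions of your permutohedron factors, and its $e_i$, $i\in B_0$, are the directions of your $Q_{B_0}$. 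One step you should make explicit: the product decomposition uses the \emph{converse} of the structure lemma, namely that every combination of allowed block configurations actually lies in $V(F_c)$, not merely that vertices of $F_c$ have this form. This is true and takes one line --- all such points have the same value of $c\cdot x$, since $c$ vanishes on $B_0$, is constant on each $B_j$, and the values placed on $B_{-1}\cup(B_1\cup\dots\cup B_k)$ do not depend on $j$; as the maximum is attained among them, they are all maximizers --- and your lower-bound construction (the points $p_i$ and the all-ones configuration, and arbitrary permutations within positive blocks) needs exactly this. In exchange, your route buys two things the paper's proof leaves invisible. First, it pinpoints why the partition $l_{-1}=0$, $l_0=1$ must be excluded: your construction degenerates precisely when $l_{-1}+l_0=1$, and in fact the paper's own argument silently requires $e_i\in W$ for $i\in B_0$ (needed for $S$ to be a \emph{subset} of $W$, not just a spanning set), which is exactly what fails in that case --- there the single $B_0$-coordinate is forced to equal $1$ and the stated formula is off by one. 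Second, the product structure of faces you establish is essentially the decomposition the paper later rederives ad hoc for facets in the volume computation of Section 4, where $F$ is split as $\cp_{n-k}'$ times a permutohedron $Q_k$; your lemma gives that for every face at once. What the paper's approach buys instead is self-containment: it never needs to quote that a permutohedron on $l$ distinct values has dimension $l-1$, since that fact is absorbed into the single linear-independence computation.
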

\begin{proof}
Let $d$ be the dimension of $F_c$. Then
$d=\text{dim}(\text{aff}(V(F_c)))$. If $d=n$ then clearly $F_c=\cp_n$
and $c=0$, so indeed $n-k-l_{-1}=n=d$. Now suppose $d<n$. Then $0
\notin \text{aff}(V(F_c))$, so $\text{dim}(\text{aff}(V(F_c)\cup
\{0\}))=d+1$.It is clear that $\text{dim}(\text{aff}(V(F_c)\cup
\{0\}))$ is the dimension of the vector space $W$ spanned by the
vectors from $0$ to points in $V(F_c)$. 

For each $j$ from $1$ to $k$, consider $B_j = \{i_1, i_2, \dots,
i_{l_j}\}$. Let $V_j$ be the set of $l_j-1$ vectors $v$ in $\mathbb{R}^n$ which
are the permutations of $(1,-1,0,\dots,0)$ having $v_{i_k}=1,
v_{i_{k+1}}=-1$, for some $1 \leq k \leq l_j-1$. Also, let $V_0$ be
the set of $l_0$ vectors $e_i$ in $\mathbb{R}^n$ which are the
permutations of $(1,0,0,\dots,0)$ having value $1$ at one of the
coordinates with index $i \in B_0$. 

Take a vector $w$ from $0$ to some point of $V(F_c)$. Consider the set $S=\left(\bigcup_{i=0}^{k} V_i\right)
\cup {w}$ of $$l_0+\sum_{i=1}^{k} (l_i-1)+1= \sum_{i=0}^{k}l_i - k +
1=n-l_{-1} -k+1$$ vectors. We will prove that $S$ spans $W$. 

For any $x \in V(F_c)$, consider the vector $a=x-w-\sum_{i \in
  B_0}(x_i-w_i)e_i$. Clearly, $a_i=0$ for $i \in B_{-1} \cup B_0$, and
for each $0 < j \leq k$, if $B_j = \{i_1, i_2, \dots, i_{l_j}\}$, then
$\sum_{m=1}^{l_j}a_{i_m}=0$. Then $(a_{i_1}, a_{i_2}, \dots,
a_{i_{l_j}})$ is a linear combination of 
 $$(1,-1,0,\dots,0), (0,1,-1,0,\dots,0), \dots, (0,\dots,0,1,-1).$$
Therefore, $a$ is a linear combination of vectors in
$\bigcup_{i=1}^{k} V_i$. Thus, $x=a+w+\sum_{i \in B_0}(x_i-w_i)e_i$ is
a linear combination of vectors in $S$, so $S$ spans $W$. 

Also, $S$ is linearly independent. If it is not, then there is a
linear combination $\beta$ of vectors in $S$ such
that $$\beta=bw+\sum_{v\in S \setminus \{w\}}b_v v=0$$ and not all of
the $b_v$ and $b$ are zero. If $l_{-1}>0$, then for all $i \in
B_{-1}$, we have $0=\beta_i=bw_i$, so $b=0$. If $k>0$, then $B_1$ is
nonempty, so 
\begin{align*}
0 & = \sum_{i \in B_1}\beta_i
\\ & =\sum_{i \in B_1}\left(bw_i+\sum_{v\in S \setminus \{w\}}b_vv_i\right)
\\ & =b\sum_{i \in B_1}w_i+\sum_{i \in B_1}\sum_{v\in S \setminus \{w\}}b_vv_i
\\ & =b\sum_{i \in B_1}w_i+\sum_{v\in S \setminus \{w\}}b_v\sum_{i \in B_1}v_i
\\ & =b\sum_{i \in B_1}w_i+\sum_{v\in S \setminus \{w\}}b_v\cdot 0
\\ & =b\sum_{i \in B_1}w_i.
\end{align*}
Therefore, $b=0$. Since $d<n$, we have $l_0<n$, so either $l_{-1}>0$ or $k>0$. In both cases $b=0$. But then $(b_1,\dots,b_{n-l_{-1} -k})\neq 0$, so $\bigcup_{i=0}^{k} V_i$ is linearly dependent, which is clearly not true.

Thus, $S$ spans $W$ and is linearly independent, which means it is a basis of $W$. Thus $d+1=\text{dim}(W)=|S|=n-l_{-1} -k+1$, so $d=n-k-l_{-1}$.
\end{proof}

\begin{proof}[Proof of Theorem~\ref{thm:faces}]
To find the number $f_{n-s}$ of $(n-s)$-dimensional faces we need to find the number of different ordered partitions $(B_{-1},B_0,\dots,B_k)$ of $\{1,\dots,n\}$ such that $l_i>0$ for $i\geq 1$ and $n-s=n-k-l_{-1}$, i.e., $s=k+l_{-1}$, not satisfying $l_{-1}=0, l_0=1$ or $l_{-1}=0, l_0=0, l_1 = 1$. For convenience, we will denote $l_{-1}$ by $m$ in further computations. We have $s=k+m$, so $m$ takes values from $0$ to $s$.

For each $m$ from $0$ to $s$, we first choose $m$ elements for $B_{-}$. Then, if $l_0=0$, we partition the remaining $n-m$ elements into $k=s-m$ nonempty ordered groups. If $l_0\geq 1$, we partition the remaining $n-m$ elements into $k+1=s-m+1$ nonempty ordered groups. Thus we have the corresponding Stirling numbers of the second kind multiplied by the number of permutations of the groups because those are ordered. Note that since we do not consider $c$ with $m=l_{-1}=0$ and $l_0=1$ or $m=l_{-1}=0$, $l_0=0$, and $l_1=1$, we need to subtract the number of such partitions. So we subtract $n\cdot k! \cdot S(n-1,k)=\binom{n}{1} \cdot s! \cdot S(n-1, s) $ and $n\cdot (k-1)! \cdot S(n-1,k-1)=\binom{n}{1} \cdot (s-1)! \cdot S(n-1,s-1)$. Therefore,
\begin{align*}
f_{n-s} & =\sum_{m=0,m\neq1}^s \binom{n}{m} \cdot \left((s-m)! \cdot S(n-m,s-m) + (s-m+1)! \cdot S(n-m, s-m+1) \right)
\\ & = \sum_{m=0,m\neq1}^s \binom{n}{m} \cdot (s-m)! \cdot S(n-m+1,s-m+1).
\end{align*}
\end{proof}

To use this formula to find the number of edges of $\cp_n$, we take $n-s=1$, so $s=n-1$. Then since $S(a,a-1)=\frac{a(a-1)}{2}$ for any positive integer $a$,
\begin{align*}
f_1 & = \sum_{m=0,m\neq1}^{n-1} \binom{n}{m} \cdot (n-m-1)! \cdot S(n-m+1,n-m)
\\ & = \sum_{m=0,m\neq1}^{n-1} \binom{n}{m} \cdot (n-m-1)! \cdot \frac{(n-m+1)(n-m)}{2}
\\ & = \sum_{m=0,m\neq1}^{n-1} \frac{n! \cdot (n-m+1)}{2m!}
\\ & = \sum_{m=1}^{n-1} \frac{n! \cdot n}{2m!} - \sum_{m=2}^{n-1} \frac{n!}{2(m-1)!} +  \sum_{m=1}^{n-1} \frac{n!}{2m!}
\\ & = \frac{n}{2}\left(\sum_{m=1}^{n-1} \frac{n!}{m!}\right) - \sum_{m=1}^{n-2} \frac{n!}{2m!} + \sum_{m=1}^{n-1} \frac{n!}{2m!}
\\ & = \frac{n}{2}(V-1) + \frac{n!}{2(n-1)!}
\\ & = \frac{nV}{2},
\end{align*}
where $V$ is the number of vertices of $\cp_n$ and is equal to
$n!\left( \frac{1}{1!}+\frac{1}{2!}+\dots+\frac{1}{n!} \right)$. This
again proves Theorem~\ref{thm:edges}. 
 
 \section{Volume}\label{sec:vol}
 To find the volume of $\cp_n$, we split the polytope into
 $n$-dimensional pyramids with facets of $\cp_n$ not containing
 $I=(1,\dots,1)$ as base and point $I$ as vertex. There are $2^n-n-1$
 such pyramids. Now we will derive a recursive formula for the volume
 of $\cp_n$ as a sum of volumes of these pyramids. 

\begin{theorem}\label{thm:volume}
 Define a sequence $\{V_n\}_{n\geq 0}$ by $V_0=1$ and $V_n=\text{Vol
 }(\cp_n)$ for all positive integers $n$. Then 
 $$V_n=\frac{1}{n}\sum_{k=0}^{n-1}\binom{n}{k}\frac{(n-k)^{n-k-1}(n+k-1)}{2}V_{k}$$
for all $n \geq 2$. 
 \end{theorem}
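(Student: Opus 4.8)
The plan is to decompose $\cp_n$ into pyramids as described in the paragraph preceding the theorem: each facet $F$ of $\cp_n$ that does not contain $I=(1,\dots,1)$ is taken as the base of a pyramid with apex $I$, and $V_n$ is the sum of the volumes of these pyramids. Since an $n$-dimensional pyramid over a base of $(n-1)$-volume $A$ with apex at (signed) distance $h$ has volume $\frac{1}{n}Ah$, I would write $V_n=\frac{1}{n}\sum_F \mathrm{Vol}_{n-1}(F)\cdot h_F$, where the sum is over the $2^n-n-1$ facets not containing $I$ and $h_F$ is the distance from $I$ to the hyperplane supporting $F$. The facets are exactly those cut out by the inequalities $\sum_{j\in T} x_{i_j}\le (n-|T|+1)+\dots+n$ for subsets $T\subseteq\{1,\dots,n\}$ with $2\le|T|\le n$ (the singleton upper bounds $x_i\le n$ all pass through $I$'s complement but not through $I$ itself — I will need to check carefully which of the $2^n-1$ facets contain $I$; those containing $I$ are exactly the lower-bound facets $x_i\ge 1$, which meet $I$, so the $2^n-n-1$ count comes from the subset-sum facets with $|T|\ge 2$).

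The key structural step is to identify each pyramid's base. By symmetry under coordinate permutation, a facet associated to a subset $T$ of size $k+1$ (or more precisely, a subset of size complementary to some $k$) should itself be a product of a lower-dimensional parking-function polytope with a simplex or cube in the remaining coordinates. Concretely, I would fix the subset of coordinates on which the tight inequality is active and argue that on that face the coordinates split into a block of size $n-k$ behaving like a scaled/translated copy of $\cp_{n-k}$ and a block of size $k$ that is free to range over a product region whose volume is elementary. This is where the factor $(n-k)^{n-k-1}$ should enter: it is reminiscent of the number of labeled forests / the volume normalization tied to parking functions of length $n-k$, and I expect the base volume to factor as $\binom{n}{k}$ (choosing which coordinates form the free block) times a power-of-$(n-k)$ term coming from the simplex in the free coordinates, times $V_k$ from the recursive copy of $\cp_k$.

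The heart of the computation is therefore to show that, after grouping facets by the size $k$ of the recursive sub-polytope they produce, the total contribution is exactly $\binom{n}{k}\frac{(n-k)^{n-k-1}(n+k-1)}{2}V_k$. I would compute each pyramid's base volume and its height $h_F$ separately: the height from $I$ to the supporting hyperplane $\sum_{j\in T}x_j=c_T$ is $(c_T-|T|)/\sqrt{|T|}$, and the base $(n-1)$-volume must be evaluated in the induced lattice/affine metric so that the product $\mathrm{Vol}_{n-1}(F)\cdot h_F$ becomes metric-independent (a common simplification: one works with the $\frac{1}{n}\langle\text{base}\rangle$ formula using the lattice-normalized or determinant-based volume rather than Euclidean, which removes the $\sqrt{|T|}$ factors). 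The factor $(n+k-1)/2$ presumably arises as an average or a sum of heights $\sum_{|T|=n-k}(c_T-|T|)$ collapsed using the arithmetic-series form of the right-hand sides of the inequalities.

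The hard part will be the bookkeeping that turns the geometric decomposition into the clean closed form, specifically proving that each base splits as claimed into a copy of $\cp_k$ times an elementary region and that the power $(n-k)^{n-k-1}$ emerges from the free block. I would verify the factorization by exhibiting an explicit affine (lattice-preserving) map from $\cp_k\times(\text{simplex})$ onto each base facet, compute its Jacobian determinant to extract the power of $(n-k)$, and only then sum over $k$ with the binomial coefficient counting the coordinate choices; checking the recurrence against small cases ($V_1$, $V_2$, $V_3$, where $\cp_3$ is the known ``half-cube'') will serve as a sanity check before committing to the general argument.
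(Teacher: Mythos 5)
Your overall strategy coincides with the paper's (pyramids with apex $I$ over the facets avoiding $I$, base volume times height, grouped by the size of the tight subset), but two concrete steps would fail as written. First, the facet inventory is wrong: the facets not containing $I$ are those cut out by $\sum_{j\in T}x_j\le (n-|T|+1)+\cdots+n$ for $|T|\in\{1,\dots,n-2\}\cup\{n\}$. The singleton bounds $x_i\le n$ \emph{are} among them (and $I$ does not lie on them), while the size-$(n-1)$ inequalities are not facets at all — they follow from the full-sum facet together with $x_j\ge 1$. Your count ``subset-sum facets with $|T|\ge 2$'' hits the right number $2^n-n-1$ only by a numerical coincidence: the $n$ phantom size-$(n-1)$ subsets exactly replace the $n$ singletons you dropped. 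The coincidence is not harmless. The phantom terms would contribute $0$ anyway (they carry a factor $V_1=0$, which is why the paper can formally include $k=n-1$ in the final sum), but omitting the singleton facets deletes the genuinely nonzero term $\binom{n}{n-1}\frac{(2n-2)}{2}V_{n-1}$ from the recurrence; already for $n=3$ you would obtain $V_3=3$ instead of the correct $4$.

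Second, and more centrally, your identification of the base is wrong where it matters most. If the tight subset $T$ has size $n-k$, the projection of the facet to the $T$-coordinates is not a simplex or cube of ``free'' coordinates: it is the \emph{permutohedron} whose vertices are the permutations of $(k+1,\dots,n)$, while the projection to the complementary $k$ coordinates is a congruent copy of $\cp_k$ (verified in the paper by matching the induced defining inequalities), and the facet is the product of these two pieces in orthogonal subspaces (the paper justifies the volume factorization via a mixed-volume decomposition lemma of Dyer--Gritzmann--Hufnagel). The factor $(n-k)^{n-k-1}$ then arises from the known permutohedron volume $(n-k)^{n-k-2}\sqrt{n-k}$ multiplied by the $\sqrt{n-k}$ left over in the Euclidean height $h=\frac{(n-k)(n+k-1)}{2\sqrt{n-k}}$ — not from a simplex or forest count — and $(n+k-1)/2$ is read off directly from this single height, not from any averaging over facets. (Your write-up is also internally inconsistent here, assigning the recursive copy to both $\cp_{n-k}$ and $\cp_k$.) Since identifying the permutohedron cross-section and knowing its volume is the heart of the computation, the proposal as it stands has a genuine gap even though its decomposition framework matches the paper's.
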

 
 In the proof of this theorem we will use the following ``decomposition lemma''.
 \begin{proposition}[{\cite[Proposition 2]{MPA}}] \label{prop:decomp}
 Let $K_1,\dots,K_n$ be some convex bodies of $\mathbb{R}^n$ and
 suppose that $K_{n-m+1},\dots,K_n$ are contained in some
 $m$-dimensional affine subspace $U$ of $\mathbb{R}^n$. Let $MV_U$
 denote the mixed volume with respect to the $m$-dimensional volume
 measure on $U$, and let $MV_{U^{\perp}}$ be defined similarly with
 respect to the orthogonal complement $U^{\perp}$ of $U$. Then the
 mixed volume of $K_1,\dots,K_n$ 
 \begin{align*}
 MV(K_1,\dots, K_{n-m}, & K_{n-m+1}, \dots,K_n)=
 \\ & \frac{1}{\binom{n}{m}} MV_{U^{\perp}}(K_1',\dots,K_{n-m}')MV_U(K_{n-m+1},\dots,K_n),
 \end{align*}
 where $K_1',\dots,K_{n-m}'$ denote the orthogonal projections of $K_1,\dots,K_{n-m}$ onto $U^{\perp}$, respectively.
 \end{proposition}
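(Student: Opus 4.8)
The plan is to prove the decomposition directly from the definition of mixed volume as the top mixed coefficient of the Minkowski-sum volume polynomial, combined with a Fubini slicing over $U^\perp$. Throughout I use the normalization under which $MV(K,\dots,K)=\mathrm{Vol}(K)$, so that for convex bodies $L_1,\dots,L_d$ in a $d$-dimensional space one has
$$MV(L_1,\dots,L_d)=\frac{1}{d!}\,\frac{\partial^d}{\partial t_1\cdots\partial t_d}\,\mathrm{Vol}_d\!\left(t_1 L_1+\cdots+t_d L_d\right),$$
the right-hand side being a constant since $\mathrm{Vol}_d(\sum t_i L_i)$ is homogeneous of degree $d$; this is precisely the normalization that produces the constant $\binom{n}{m}^{-1}$. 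Because mixed volume is invariant under translating each argument, I first translate so that the affine subspace $U$ passes through the origin; hence I may assume $U$ is linear, $\mathbb{R}^n=U^\perp\oplus U$ with $\dim U=m$, and I write $P$ for orthogonal projection onto $U^\perp$.

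First I set $A=\lambda_1 K_1+\cdots+\lambda_{n-m}K_{n-m}$ and $B=\mu_1 K_{n-m+1}+\cdots+\mu_m K_n$, so that $B\subset U$ and $L:=A+B$ is the general Minkowski sum whose volume polynomial encodes $MV(K_1,\dots,K_n)$. The geometric heart of the argument is the slice identity: for each $y\in U^\perp$,
$$(A+B)\cap(y+U)=\bigl(A\cap(y+U)\bigr)+B,$$
which holds because $B\subset U$ forces $P(a+b)=Pa$, so a point of $A+B$ projects to $y$ exactly when its $A$-part does. Writing $C_y:=A\cap(y+U)$ and applying Fubini over $U^\perp$ gives
$$\mathrm{Vol}_n(L)=\int_{U^\perp}\mathrm{Vol}_m\bigl(C_y+B\bigr)\,dy .$$

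Next I extract the coefficients by differentiation. For fixed $A$ (hence fixed $C_y$) the inner integrand is a polynomial of degree $m$ in $\mu_1,\dots,\mu_m$, and its top mixed coefficient does not involve $C_y$ at all: by degree counting, using each $K_{n-m+j}$ once already exhausts the $m$ dimensions of $U$, so
$$\frac{\partial^m}{\partial\mu_1\cdots\partial\mu_m}\,\mathrm{Vol}_m(C_y+B)=m!\,MV_U(K_{n-m+1},\dots,K_n)\cdot\mathbf{1}[\,C_y\neq\varnothing\,].$$
Since $C_y\neq\varnothing$ precisely when $y\in P(A)$, differentiating under the integral sign (justified because, for fixed $A$, the integrand and its $\mu$-derivatives are polynomials in $\mu$ with bounded coefficients supported on the compact set $P(A)$) yields
$$\frac{\partial^m}{\partial\mu_1\cdots\partial\mu_m}\,\mathrm{Vol}_n(L)=m!\,MV_U(K_{n-m+1},\dots,K_n)\cdot\mathrm{Vol}_{n-m}\bigl(P(A)\bigr).$$
Because $P$ commutes with scaling and Minkowski sum, $P(A)=\lambda_1 K_1'+\cdots+\lambda_{n-m}K_{n-m}'$, a body in $U^\perp$; so differentiating in $\lambda_1,\dots,\lambda_{n-m}$ turns $\mathrm{Vol}_{n-m}(P(A))$ into $(n-m)!\,MV_{U^\perp}(K_1',\dots,K_{n-m}')$. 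Combining the two differentiations, recognizing the left side as $\partial^n/\partial\lambda_1\cdots\partial\mu_m\,\mathrm{Vol}_n(L)$, and dividing by $n!$ as in the defining formula gives
$$MV(K_1,\dots,K_n)=\frac{m!\,(n-m)!}{n!}\,MV_{U^\perp}(K_1',\dots,K_{n-m}')\,MV_U(K_{n-m+1},\dots,K_n),$$
and $\tfrac{m!(n-m)!}{n!}=\binom{n}{m}^{-1}$ is the asserted constant.

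The step I expect to be the main obstacle is justifying that the $\mu$-derivative of the inner slice volume equals exactly $m!\,MV_U$ times $\mathbf{1}[\,C_y\neq\varnothing\,]$, uniformly in $y$. This requires the degree-counting observation that the pure $B$-mixed-volume term is independent of the companion body $C_y$ — even when $C_y$ is lower-dimensional or a single point, in which case $C_y+B$ is merely a translate of $B$ and contributes the same top coefficient — together with the measure-theoretic justification of differentiating under the integral and the fact that the boundary set $\partial P(A)$, where $C_y$ degenerates, has $(n-m)$-dimensional measure zero and hence does not affect the integral. Once this is secured, the remaining manipulations are the routine coefficient extractions recorded above.
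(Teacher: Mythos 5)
The paper does not prove this statement at all: it is quoted verbatim as \cite[Proposition 2]{MPA} and used as a black box, so there is no internal proof to compare against, and your argument has to stand on its own. It does. The slice identity $(A+B)\cap(y+U)=(A\cap(y+U))+B$ for $B\subset U$ is correct (both inclusions follow from $P(a+b)=Pa$), the reduction to linear $U$ via translation invariance of mixed volumes is legitimate, and the top-coefficient extraction is sound: in the multinomial expansion of $\mathrm{Vol}_m(C_y+\mu_1K_{n-m+1}+\cdots+\mu_mK_n)$ the coefficient of $\mu_1\cdots\mu_m$ is the term with $C_y$ appearing zero times, hence equals $m!\,MV_U(K_{n-m+1},\dots,K_n)$ for every nonempty $C_y$, including degenerate ones, exactly as you argue. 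The step you flag as the main obstacle is in fact the least problematic: since $P(A)$ is closed, $C_y\neq\varnothing$ precisely on all of $P(A)$, so the indicator is the characteristic function of $P(A)$ and no measure-zero boundary discussion is needed. The interchange of differentiation and integration can also be made cleaner than your measure-theoretic justification: $\mathrm{Vol}_n(L)$ is already a polynomial in $(\lambda,\mu)$ jointly by Minkowski's theorem, and for each fixed $\lambda$ the integrand is a polynomial in $\mu$ of degree at most $m$ with uniformly bounded coefficients supported on the compact set $P(A)$; polynomials of bounded degree form a finite-dimensional space, so integration acts coefficient-wise (e.g.\ by interpolation at a fixed grid), and you are simply equating the coefficient of $\mu_1\cdots\mu_m$, as a polynomial in $\lambda$, on both sides. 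The final bookkeeping $n!\,MV(K_1,\dots,K_n)=m!\,(n-m)!\,MV_{U^\perp}(K_1',\dots,K_{n-m}')\,MV_U(K_{n-m+1},\dots,K_n)$ gives $\binom{n}{m}^{-1}$ as claimed, and your normalization ($MV(K,\dots,K)=\mathrm{Vol}(K)$) matches the one the paper needs in the proof of Theorem~\ref{thm:volume}, where the full sum of mixed volumes of $\cp_{n-k}'$ and $Q_k$ reproduces $\mathrm{Vol}(F)$. For what it is worth, the original proof in \cite{MPA} also proceeds through the Minkowski volume polynomial and a product decomposition along $U\oplus U^{\perp}$, so your Fubini-plus-coefficient-extraction route is essentially a correct self-contained reconstruction of the cited proof, with the added benefit of making the normalization dependence of the constant $\binom{n}{m}^{-1}$ explicit.
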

 
 \begin{proof}[Proof of Theorem~\ref{thm:volume}] 
 Each pyramid has a base which is a facet $F$ with points of $\cp_n$ satisfying the equation $$x_{i_1}+x_{i_2}+\dots+x_{i_k}=(n-k+1)+(n-k+2)+\dots+(n-1)+n$$ for some $k \in \{1,2,\dots,n-2,n\}$ and distinct $i_1< \dots < i_k$. 

 Let $\{j_1,j_2,\dots,j_{n-k}\}=\{1,2,\dots,n\}-\{i_1,i_2,\dots,i_k\}$. Let $\cp_{n-k}'$ be the polytope containing all points $x'$ such that $x_p'=0$ for all $p \in \{i_1,i_2,\dots,i_k\}$ and for some $x \in F$, $x_p'=x_p$ for all $p \in \{j_1,j_2,\dots,j_{n-k}\}$. Then $\cp_{n-k}'$ is an $(n-k)$-dimensional polytope with the following defining inequalities:
 \begin{align*}
 1\leq & x_{j_p}' \leq n-k, \ 1 \leq p \leq n-k
 \\ x_{j_p}'+x_{j_q}' & \leq (n-k-1)+(n-k), \ 1\leq p<q \leq n-k
 \\ x_{j_p}'+x_{j_q}'+x_{j_r}' & \leq (n-k-2)+(n-k-1)+(n-k), \ 1\leq p<q<r \leq n-k
 \\ & \vdots
 \\ x_{j_{p_1}}'+x_{j_{p_2}}'+\dots+x_{j_{p_{n-k-2}}}' & \leq 3+4+\dots+(n-k), \ 1\leq p_1<p_2<\dots<p_{n-k-2}\leq n-k
 \\ x_{j_{p_1}}'+x_{j_{p_2}}'+\dots+x_{j_{p_{n-k}}}' & \leq 1+2+3+4+\dots+(n-k).
 \end{align*}
 This means $\cp_{n-k}'$ is congruent to $\cp_{n-k}$, so $\text{Vol}_{n-k}(\cp_{n-k}')=\text{Vol}_{n-k}(\cp_{n-k})=V_{n-k}$.
 
 Let $Q_k$ be the polytope containing all points $x'$ such that for all $p \in \{j_1,j_2,\dots,j_{n-k}\}$, we have $x_p'=0$, and for some $x \in F$, we have $x_p'=x_p$ for all $p \in \{i_1,i_2,\dots,i_k\}$. Then the coordinate values $(x_{i_1}',x_{i_2}',\dots,x_{i_k}')$ of vertices of $Q_k$ are the permutations of $(n-k+1,n-k+2,\dots,n)$, meaning $Q_n$ is a $(k-1)$-dimensional polytope congruent to the permutohedron of order $k$ which has $(k-1)$-dimensional volume $k^{k-2}\sqrt{k}$.
 
 Thus, $F$ is a Minkowski sum of two polytopes $\cp_{n-k}'$ and $Q_k$ which lie in two orthogonal subspaces of $\mathbb{R}^n$. Therefore, by Proposition~\ref{prop:decomp}, the $(n-1)$-dimensional volume of $F$ is equal to 
 $$\sum_{p_1,\dots,p_n=1}^{2}MV(K_{p_1},K_{p_2},\dots,K_{p_n})=V_{n-k}\cdot k^{k-2}\sqrt{k},$$
 where $K_1=\cp_{n-k}'$ and $K_2=Q_k$. Then the volume of $\text{Pyr}(I,F)$, the pyramid with $F$ as a base and $I$ as a vertex, is equal to $$\frac{1}{n}h_k\text{Vol}(F)=\frac{1}{n}h_kV_{n-k}\cdot k^{k-2}\sqrt{k},$$ where
 $$h_k=\frac{|1+\dots+1-((n-k+1) +(n-k+2)+\dots+(n-1)+n)|}{\sqrt{1+\dots+1}} =\frac{k(2n-k-1)}{2\sqrt{k}}$$
 is the distance from point $I$ to the face $F$.
 Thus,
 $$\text{Vol}(\text{Pyr}(I,F))=\frac{1}{n} \cdot \frac{k(2n-k-1)}{2\sqrt{k}}V_{n-k}\cdot k^{k-2}\sqrt{k}=\frac{1}{n} \cdot \frac{k(2n-k-1)}{2}k^{k-2}V_{n-k}.$$
Since $V_0=1$ and $V_1=0$, we get for $n\geq 2$,
\begin{align*}
V_n & = \frac{1}{n}\left(\sum_{k=1}^{n-2}\binom{n}{k}\frac{k(2n-k-1)}{2}k^{k-2}V_{n-k}\right)+\frac{1}{n} \cdot \frac{n(n-1)}{2}n^{n-2} 
\\ & = \frac{1}{n}\left(\sum_{k=2}^{n-1}\binom{n}{n-k}\frac{(n-k)(n+k-1)}{2}(n-k)^{n-k-2}V_{k}\right)+\frac{1}{n} \cdot \frac{n^{n-1}(n-1)}{2} 
\\ & = \frac{1}{n}\sum_{k=0}^{n-1}\binom{n}{k}\frac{(n-k)^{n-k-1}(n+k-1)}{2}V_{k}.
\end{align*}
\end{proof}

For $n=1,2,\dots,8$ this formula gives the volume values
$0,\frac{1}{2}$, 4, $\frac{159}{4}$, 492, $\frac{58835}{8}$, 129237,
$\frac{41822865}{16}$. 

\begin{proposition}
Let $$f(x)=\sum_{n\geq 0} \frac{V_n}{n!}x^n$$ be the exponential generating function of $\{V_n\}_{n\geq 0}$. Let $$g(x)=\sum_{n\geq 1} \frac{n^{n-1}}{n!}x^n$$ be the exponential generating function of $\{n^{n-1}\}_{n\geq 1}$. Then $$f(x)=e^{\int \frac{x(g'(x))^2}{2}}.$$
\end{proposition}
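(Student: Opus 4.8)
The plan is to turn the recurrence of Theorem~\ref{thm:volume} into a first-order linear ODE for $f$, and then collapse the resulting logarithmic derivative using the functional equation satisfied by $g$. First I would rewrite the recurrence in the homogeneous form $nV_n=\sum_{k=0}^{n-1}\binom{n}{k}\frac{(n-k)^{n-k-1}(n+k-1)}{2}V_k$, now valid for all $n\ge 1$ (the case $n=1$ is checked by hand, both sides being $0$, since $V_1=0$). The binomial coefficient is the signature of a product of exponential generating functions, so the strategy is to read each side off as an EGF identity in $f$, $g$, and their derivatives. The sequence $\{nV_n\}$ has EGF $xf'(x)$, which handles the left-hand side.

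The one obstacle to treating the right-hand side as an EGF product is the factor $n+k-1$, which entangles the summation index $k$ with $n$. To separate the variables I would write $n+k-1=(n-k)+(2k-1)$, splitting the convolution into two clean pieces whose coefficient sequences depend either on $n-k$ alone or factor as (function of $n-k$)$\times$(function of $k$). Using the elementary identities $\sum_{j\ge1}\frac{j^j}{j!}x^j=xg'(x)$ and $\sum_{j\ge1}\frac{j^{j-1}}{j!}x^j=g(x)$, together with $\sum_{k\ge0}\frac{(2k-1)V_k}{k!}x^k=2xf'(x)-f(x)$, the right-hand side becomes $\tfrac12\,xg'f+xgf'-\tfrac12\,gf$. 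Equating this with $xf'$ and rearranging gives
\[
x\bigl(1-g\bigr)f'=\tfrac12\bigl(xg'-g\bigr)f,\qquad\text{so}\qquad \frac{f'}{f}=\frac{xg'-g}{2x(1-g)}.
\]

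It then remains to identify the right-hand side with $\frac{x(g'(x))^2}{2}$. Here I would invoke that $g$ is the tree function, i.e.\ the unique power series with $g(0)=0$ solving $g=xe^{g}$; differentiating this relation and using $xe^{g}=g$ yields $g'=\frac{g}{x(1-g)}$. A one-line computation then gives $xg'-g=\frac{g^2}{1-g}$ and $x(g')^2=\frac{g^2}{x(1-g)^2}$, so both $\frac{xg'-g}{2x(1-g)}$ and $\frac{x(g')^2}{2}$ equal $\frac{g^2}{2x(1-g)^2}$. Hence $\frac{f'}{f}=\frac{x(g')^2}{2}$, and integrating—fixing the constant of integration by $f(0)=V_0=1$, so that $\int\frac{x(g')^2}{2}$ denotes the antiderivative with zero constant term—yields $f=e^{\int \frac{x(g')^2}{2}}$. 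The main substantive steps are the variable-separating split $n+k-1=(n-k)+(2k-1)$ that makes the convolution translate into a product, and the recognition of $g$ as the tree function so that $g=xe^{g}$ (hence $g'=\frac{g}{x(1-g)}$) is available; the rest is formal manipulation of generating functions.
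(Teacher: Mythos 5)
Your proof is correct and takes essentially the same route as the paper: the same split $n+k-1=(n-k)+(2k-1)$ converts the recurrence of Theorem~\ref{thm:volume} into the ODE $x(1-g)f'=\tfrac{1}{2}(xg'-g)f$, and the same functional equation $g=xe^{g}$ identifies the logarithmic derivative with $\tfrac{x(g'(x))^2}{2}$. The only differences are cosmetic: the paper splits the convolution into three sums and simplifies via $xg'-g=xgg'$ and $1-g=\frac{g}{xg'}$, while you package $2k-1$ into the single EGF $2xf'-f$ and pass through the closed form $g'=\frac{g}{x(1-g)}$.
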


\begin{proof}
It is known that $g(x)=xe^{g(x)}$, so 
\begin{equation*}\label{eq:g}
g'(x)=e^{g(x)}+x g'(x) e^{g(x)}=\frac{g(x)}{x}+g(x)g'(x). \tag{$\ast$}
\end{equation*}
From Theorem~\ref{thm:volume},
\begin{align*}
n \cdot \frac{V_n}{n!} & =\sum_{k=0}^{n-1}\frac{(n-k)^{n-k-1}(n+k-1)}{2(n-k)!} \cdot \frac{V_k}{k!}
\\ & = \sum_{k=0}^{n-1}\frac{(n-k)^{n-k-1}(n-k+2k-1)}{2(n-k)!} \cdot \frac{V_k}{k!}
\\ & = \sum_{k=0}^{n-1}\frac{1}{2} \cdot \frac{(n-k)^{n-k}}{(n-k)!} \cdot \frac{V_k}{k!}+\sum_{k=0}^{n-1}\frac{(n-k)^{n-k-1}k}{(n-k)!} \cdot \frac{V_k}{k!}-\sum_{k=0}^{n-1}\frac{1}{2} \cdot \frac{(n-k)^{n-k-1}}{(n-k)!} \cdot \frac{V_k}{k!}.
\end{align*}
Therefore,
$$f'(x) = \frac{1}{2}g'(x)f(x)+g(x)f'(x)-\frac{1}{2x}g(x)f(x).$$
Then
$$f'(x)(1-g(x))  =\frac{1}{2x}(xg'(x)-g(x))f(x) \underset{\text{by }
  \eqref{eq:g}}{=}
\ \frac{1}{2x}xg(x)g'(x)f(x)=\frac{1}{2}g(x)g'(x)f(x),$$ 
so
  $$ f'(x)  = \frac{g(x)g'(x)f(x)}{2(1-g(x))} \underset{\text{by }
    \eqref{eq:g}}{=} \ \frac{g(x)g'(x)f(x)}{2\left(\frac{g(x)}{xg'(x)}\right)}=
   \frac{x(g'(x))^2}{2}f(x).$$ 
Thus, $f(x)=ce^{\int \frac{x(g'(x))^2}{2}}$. It is clear that $c=1$,
so $f(x)=e^{\int \frac{x(g'(x))^2}{2}}$. 
\end{proof}

\section{Lattice Points}\label{sec:lat}
In this section we determine the number of integer points in $\cp_n$.

\begin{proposition}
Let $\cp_{n,S}$ be the set of points $x$ in $\cp_n$ satisfying $x_1+\dots+x_n=S$. For each integer $S$ from $n+1$ to $\frac{n(n-1)}{2}$ there is a unique pair of positive integers $(r,k)$ such that $2\leq r \leq k+1$,
$$\underbrace{1+\dots+1}_{k \text{ ones}}+r+(k+2)+\dots+n=S,$$
and the set of vertices of $\cp_{n,S}$ is the set of permutations of $(1,\dots,1,r,k+2,\dots,n)$. For the case $S=n$, the set of vertices of $\cp_{n,n}$ is just one vertex $(1,\dots,1)$.
\end{proposition}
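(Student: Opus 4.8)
The plan is to reduce the statement to the facial structure of $\cp_n$ established earlier. I will first prove the arithmetic part. For $1\le k\le n-1$ let $B_k=\underbrace{1+\cdots+1}_{k}+(k+2)+\cdots+n$ be the coordinate sum of $(1,\dots,1,k+2,\dots,n)$; then the vectors $(1,\dots,1,r,k+2,\dots,n)$ with $2\le r\le k+1$ realize exactly the integer sums in $[B_k+2,\,B_k+k+1]$. A one-line computation gives $B_k=B_{k+1}+(k+1)$, so the interval for $k$ starts one unit above where the interval for $k+1$ ends; hence as $k$ runs over $\{1,\dots,n-1\}$ these intervals tile $[n+1,\tfrac{n(n+1)}{2}]$ with no gaps or overlaps, which contains the stated range and yields the existence and uniqueness of $(r,k)$.

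Next I record that $w:=(1,\dots,1,r,k+2,\dots,n)$ is a parking function: its increasing rearrangement is $1,\dots,1,r,k+2,\dots,n$, and the inequality $b_i\le i$ holds because $r\le k+1$ controls the $(k+1)$-st entry while all remaining entries are clearly in range. Since any permutation of a parking function is again a parking function, every permutation of $w$ lies in $\cp_n$, and each has coordinate sum $S$; thus all permutations of $w$ lie in $\cp_{n,S}$.

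The core of the proof is to show these are the only vertices of the section $\cp_{n,S}=\cp_n\cap\{x:\sum_i x_i=S\}$. I will use the standard fact that the vertices of a hyperplane section $P\cap H$ are the vertices of $P$ lying on $H$ together with the points where edges of $P$ meet $H$ transversally. The vertices of $\cp_n$ are the layer vertices, and their coordinate sums $s_0<s_1<\cdots<s_{n-1}$ are strictly increasing with consecutive gaps $n-1,n-2,\dots$; by Proposition~\ref{prop:nedg} every edge of $\cp_n$ joins two vertices on equal or neighboring layers, and a neighboring edge out of a $p$-ones vertex raises one of its $1$'s to the value $p$ (so the sum increases by exactly $p-1$, the gap between the two layers). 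If $S=s_j$, only the layer-$j$ vertices lie on $H$, and since layers two apart are never adjacent there are no transversal crossings (same-layer edges stay at one sum, and edges on either side do not straddle $S$); the section's vertices are precisely the layer-$j$ vertices, which form the orbit of $w$ (the case $r=k+1$). If $s_j<S<s_{j+1}$, no vertex lies on $H$ and only the edges between layers $j$ and $j+1$ cross $H$; each such crossing raises a single $1$ of the lower vertex to $1+(S-s_j)$, so all crossings form the single orbit of the vector with $1+(S-s_j)$ in one raised slot, matching $w$ with $r=1+(S-s_j)$. Either way, the vertex set of $\cp_{n,S}$ is exactly the set of permutations of $w$.

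The delicate point, and the step I expect to require the most care, is the index bookkeeping in the last paragraph: one must match the two descriptions of the same vector (the proposition's normal form $(1,\dots,1,r,k+2,\dots,n)$ versus the ``raise one $1$'' description of a layer or edge), check that the lower layer bracketing a given $S$ has exactly $k+1$ ones so that the crossing lands on $k$ ones, and confirm that the boundary value $r=k+1$ coincides with the upper-layer sum so the two cases agree. Finally, the case $S=n$ is immediate: the minimum of $\sum_i x_i$ over $\cp_n$ equals $n$ and is attained only at $(1,\dots,1)$, so $\cp_{n,n}=\{(1,\dots,1)\}$.
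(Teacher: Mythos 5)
Your proof is correct, but it takes a genuinely different route from the paper's. The paper argues directly from the facet inequalities: it takes a putative vertex $a$ of $\cp_{n,S}$ with weakly increasing coordinates and, in two cases according to whether $a_k=a_{k-1}$ or $a_k>a_{k-1}$, shows by explicit $\pm\epsilon$-perturbations (verified against the inequalities $x_{i_j}+\dots+x_{i_k}\le j+\dots+k$) that any repeated value $c>1$ permits writing $a=\frac12 x+\frac12 x'$ with $x,x'\in\cp_{n,S}$, which forces the normal form $(1,\dots,1,r,k+2,\dots,n)$. You instead reduce everything to the graph structure of $\cp_n$ from Section~\ref{sec:edg}: vertices of the section $\cp_n\cap H$ are the vertices of $\cp_n$ on $H$ together with transversal edge crossings, edges only join equal or neighboring layers, and inter-layer edges are exactly the ``raise one $1$ to $p$'' segments, so the crossings at level $S$ form the single $S_n$-orbit of $(1,\dots,1,r,k+2,\dots,n)$. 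This is shorter and more conceptual --- it makes transparent \emph{why} each slice is a permutohedron (the slab between adjacent layers cuts each inter-layer edge once) --- whereas the paper's argument is self-contained within Section~\ref{sec:lat} and independent of the edge classification. Three housekeeping points. First, the layer-adjacency fact you invoke is the unlabeled proposition \emph{preceding} Proposition~\ref{prop:nedg}, and the description of inter-layer edges (every such edge raises one $1$ to $p$, and all $k$ upward edges at a $k$-ones vertex exist) appears only inside the proof of Proposition~\ref{prop:nedg}; a careful writeup should extract these as a standalone lemma, since you need the existence direction to conclude that \emph{every} permutation of $w$ is a crossing (alternatively, nonemptiness of the vertex set plus the transitive $S_n$-action on permutations of $w$ gives this for free). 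Second, the ``standard fact'' on hyperplane sections deserves its one-line proof: a vertex of $P\cap H$ lies in the relative interior of a unique face $F$ of $P$, and $\dim(F\cap H)\ge \dim F-1$ forces $\dim F\le 1$, while conversely edge crossings and on-$H$ vertices are extreme in the section. Third, your interval-tiling computation in fact establishes the correct range $n+1\le S\le \frac{n(n+1)}{2}$; the upper limit $\frac{n(n-1)}{2}$ in the statement is a typo, which your argument silently corrects.
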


\begin{proof}
It is clear that if $S=n$, then the only point $x$ in $\cp_{n,S}$ satisfies $x_1=\dots=x_n=1$. For this case we can say $k=n$ and $r$ is unnecessary.

Since $1+\dots+1<1+\dots+1+n<\dots<1+2+\dots+n$,
for each $S$ from $n+1$ to $\frac{n(n-1)}{2}$ there is a unique $k\leq (n-1)$ such that
$$1+\dots+1+(k+2)+\dots+n<S\leq 1+\dots+1+(k+1)+\dots+n.$$
Then $0<S-(1+\dots+1+(k+2)+\dots+n)\leq k$, so take $$r = 1+S-(1+\dots+1+(k+2)+\dots+n)$$ for which $1<r\leq k+1.$ Then indeed $1+\dots+1+r+(k+2)+\dots+n=S$.

Suppose there is another $(r',k')$ such that $1+\dots+1+r'+(k'+2)+\dots+n=S$. If $k<k'$, then
\begin{align*}
1+\dots+1+r'+(k'+2)+\dots+n & \leq 1+\dots+1+(k'+1)+(k'+2)+\dots+n
\\ & \leq 1+\dots+1+(k+2)+\dots+n
\\ & < 1+\dots+1+r+(k+2)+\dots+n,
\end{align*}
a contradiction. Thus, $k\geq k'$. Similarly, $k' \geq k$, so $k=k'$, from where it is clear that $r=r'$.

Now we will prove that set of vertices of $\cp_{n,S}$ is the set of permutations of $(1,\dots,1,r,k+2,\dots,n)$. Let $a=(a_1,\dots,a_n)$ be a vertex of $\cp_{n,S}$. Since $\cp_{n,S}$ is invariant under coordinate permutation, we may assume $a_1\leq \dots\leq a_n$.

If there is no $1 \leq k \leq n$ such that $a_k<k$, then clearly $a_i=i$ for all $1 \leq i \leq n$. In this case $k=1$, $r=2$, and $a$ is indeed a permutation of $(1,\dots, 1,r,k+2,\dots,n)=(1,2,\dots, n)$. Otherwise, take the greatest $1 \leq k \leq n$ such that $a_k<k$. Then $a=(a_1,\dots,a_k,k+1,\dots,n)$.
\\\\
\textbf{Case 1: $a_k=a_{k-1}$.}
Suppose $c=a_m=\dots=a_k\leq k-1$ and $a_{m-1} \neq c$. Then $$c=\frac{a_m+\dots+a_k}{k-m+1}\leq \frac{m+\dots+k}{k-m+1} = \frac{m+k}{2}.$$
Suppose $c>1$. Then there exists $\epsilon >0$ such that $\epsilon \leq \frac{j-m}{2}(k-j+1)$  for each $j$ from $m+1$ to $k$. Consider $$x=(a_1,\dots,a_{m-1},a_m-\epsilon,a_{m+1},\dots,a_{k-1}, a_k+\epsilon, a_{k+1}, \dots, a_n).$$ For any $m+1\leq j \leq k$,
\begin{align*}
a_j+\dots+a_k+\epsilon & = c(k-j+1)+\epsilon
\\ & \leq \frac{m+k}{2}(k-j+1)+\frac{j-m}{2}(k-j+1)
\\ & = \frac{j+k}{2}(k-j+1)
\\ & = j+\dots+k.
\end{align*}
This means $x$ satisfies all the defining inequalities of $\cp_n$, so $x\in \cp_{n,S}$. Therefore, $$x'=(a_1,\dots,a_{m-1},a_m+\epsilon,a_{m+1},\dots,a_{k-1}, a_k-\epsilon, a_{k+1}, \dots, a_n)$$ is also in $\cp_{n,S}$ since it is just a permutation of $x$. But then $a=\frac{1}{2}x+\frac{1}{2}x'$, so $a$ is not a vertex of $\cp_{n,S}$ if $c>1$.

Therefore, $c=1$, and since $1\leq a_1\leq \dots\leq a_k=c=1$, we have $a_1=\dots=a_k=1$ and $S=1+\dots+1+(k+1)+\dots+n$, so $(r,k)=(k+1,k)$ and $a$ is indeed a permutation of $(1,\dots,1,r,k+2,\dots,n)$.
\\\\
\textbf{Case 2: $a_k>a_{k-1}$.}
Then, since $a_{k-1}\geq 1$, we have $a_k\geq 2$. Suppose $c=a_m=\dots=a_{k-1}<a_k\leq k-1$ and $a_{m-1} \neq c$. Then 
\begin{align*}
c & = \frac{a_m+\dots+a_{k-1}}{k-m}   
\\ & = \frac{a_m+\dots+a_{k-1}+a_k-a_k}{k-m}
\\ & \leq \frac{m+\dots+k-a_k}{k-m}
\\ & = \frac{\frac{1}{2}(m+k)(k-m+1)-a_k}{k-m}.
\end{align*}
Suppose $c>1$. For any $j$ from $m+1$ to $k$,
\begin{align*}
(j+\dots+k)- ( & a_j + \dots +a_k) = \frac{1}{2}(j+k)(k-j+1)-c(k-j)-a_k
\\ & \geq \frac{1}{2}(j+k)(k-j+1)-(k-j)\frac{\frac{1}{2}(m+k)(k-m+1)-a_k}{k-m}-a_k
\\ & = \frac{1}{2}(j+k)(k-j+1)-(k-j)\frac{\frac{1}{2}(m+k)(k-m+1)}{k-m} + a_k\left(\frac{m-j}{k-m}\right)
\\ & > \frac{1}{2}(j+k)(k-j+1)-(k-j)\frac{\frac{1}{2}(m+k)(k-m+1)}{k-m} + \frac{k(m-j)}{k-m}
\\ & =\frac{(k-j)(j-m)}{2} \geq 0.
\end{align*}
Then there exists $\epsilon >0$ such that $\epsilon < (j+\dots+k)-(a_j+\dots+a_k)$  for each $j$ from $m+1$ to $k$.
Consider $$x=(a_1,\dots,a_{m-1},a_m-\epsilon,a_{m+1},\dots,a_{k-1},
a_k+\epsilon, a_{k+1}, \dots, a_n).$$ For any $m+1\leq j \leq k$,
$a_j+\dots+a_k+\epsilon < j+\dots+k$. This means $x$ satisfies all the
defining inequalities of $\cp_n$, so $x\in
\cp_{n,S}$. Also, $$x'=(a_1,\dots,a_{m-1},a_m+\epsilon,a_{m+1},\dots,a_{k-1},
a_k-\epsilon, a_{k+1}, \dots, a_n)$$ is also in $\cp_{n,S}$. But then
$a=\frac{1}{2}x+\frac{1}{2}x'$, so $a$ is not a vertex of $\cp_{n,S}$
if $c>1$.

Therefore, $c=1$ and since $1\leq a_1\leq \dots\leq a_{k-1}=c=1$, we
have $a_1=\dots=a_{k-1}=1$. Then $S=1+\dots+1+a_k+(k+1)+\dots+k$,
where $2\leq r=a_k<k$, so $a$ is indeed a permutation of
$(1,\dots,1,r,k+1,\dots,n)$. 
\end{proof}

Thus, we have that $\cp_{n,S}$ is a permutohedron with permutations of
$(1,\dots,1,r,k+2,\dots,n)$ as its vertices. In the case $S=n$,
$\cp_{n,S}$ is a permutohedron consisting of one point
$(1,\dots,1)$. In other words, $\cp_{n,S}$ is the convex hull of all
permutations of vector $(x_1,\dots,x_n)$, where 
$$(x_1,\dots,x_n)=
\begin{cases} 
(1,\dots,1,r,k+2,\dots,n) & \text{if } S>n,
\\ (1, \dots, 1) & \text{if } S=n.
\end{cases}$$

Let $N(P)$ denote the number of integer points in a polytope $P$. Then,
$$N(\cp_n)=\sum_{S=n}^{\frac{n(n-1)}{2}}N(\cp_{n,S}).$$
From \cite[Section 4]{P}, $\cp_{n,S}$ is a generalized
permutohedron $\cp_{n-1}(\mathbf{Y})$ with $Y_I=y_{|I|}$ for any $I
\subset [n]$ and 
\begin{align*}
y_1 & = x_1
\\ y_2 & = x_2-x_1
\\ y_3 & = x_3-2x_2+x_1
\\ & \ \vdots
\\ y_n & = \binom{n-1}{0}x_{n}-\binom{n-1}{1}x_{n-1}+\dots \pm \binom{n-1}{n-1}x_{1}.
\end{align*}
Therefore by \cite[Theorem 4.2]{P}, we have proved the following result.

\begin{theorem} \label{thm:ip}
We have $N(\cp_n)=\sum_{S=n}^{\frac{n(n-1)}{2}}N(\cp_{n,S})$, where
 $$ N(\cp_{n,S})=\frac{1}{(n-1)!}\sum_{(S_1,\dots,S_{n-1})}\left\{Y_{S_1}\cdots
   Y_{S_{n-1}}\right\}. $$ 
The summation is over ordered collections of subsets
$S_1,\dots,S_{n-1} \subset [n]$ such that for any distinct
$i_1,\dots,i_k$, we have $|S_{i_1}\cup \dots \cup S_{i_k}|\geq k+1$,
and  
 $$\left\{
  \prod_{I}Y_I^{a_I}\right\}:=(Y_{[n]}+1)^{\{a_{[n]}\}}\prod_{I\neq
  [n]}Y_I^{\{a_I\}} \text{, where } Y^{\{a\}}=Y(Y+1)\dots
  (Y+a-1). $$
\end{theorem}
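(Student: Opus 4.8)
The plan is to reduce the count of lattice points of $\cp_n$ to a sum of lattice-point counts of permutohedra, and then to quote Postnikov's formula for each such permutohedron.

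First I would justify the outer decomposition $N(\cp_n)=\sum_{S}N(\cp_{n,S})$. Every integer point $x\in\zz^n\cap\cp_n$ has an integer coordinate sum $S=x_1+\cdots+x_n$, and by definition $\cp_{n,S}$ collects exactly those points of $\cp_n$ lying on the hyperplane $x_1+\cdots+x_n=S$. Since these hyperplanes are parallel and distinct for distinct $S$, the slices partition $\zz^n\cap\cp_n$, so their lattice-point counts add. The only nonempty slices are those whose hyperplane actually meets $\cp_n$: the coordinate sum ranges over $\cp_n$ between its minimum $n$ (attained at $I$) and its maximum (attained at $(1,2,\dots,n)$), which pins down the range of $S$ appearing in the summation.

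Next I would identify each nonempty slice as a permutohedron. The preceding proposition already shows that the vertices of $\cp_{n,S}$ are exactly the permutations of the single vector $(1,\dots,1,r,k+2,\dots,n)$ (or of $(1,\dots,1)$ when $S=n$), for the unique pair $(r,k)$ attached to $S$. Hence $\cp_{n,S}$ is the convex hull of all permutations of one vector $(x_1,\dots,x_n)$, i.e.\ an ordinary (possibly degenerate) permutohedron. To invoke Postnikov's machinery I would then realize it as the generalized permutohedron $\cp_{n-1}(\mathbf{Y})$ whose parameters $Y_I$ depend only on $|I|$, writing $Y_I=y_{|I|}$. The $y_i$ are recovered from the generating vector by the inverse binomial transform displayed just before the theorem, $y_k=\sum_{i=0}^{k-1}(-1)^i\binom{k-1}{i}x_{k-i}$; this is precisely Postnikov's coordinatization of a standard permutohedron as a signed Minkowski combination of simplices in Section~4 of \cite{P}.

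Finally I would apply \cite[Theorem 4.2]{P} verbatim to $\cp_{n-1}(\mathbf{Y})$. That result expresses the number of its lattice points as $\frac{1}{(n-1)!}\sum_{(S_1,\dots,S_{n-1})}\{Y_{S_1}\cdots Y_{S_{n-1}}\}$, the sum taken over ordered tuples of subsets $S_1,\dots,S_{n-1}\subset[n]$ obeying the Hall-type condition $|S_{i_1}\cup\cdots\cup S_{i_k}|\ge k+1$ for all distinct indices, with the bracket polynomial as defined in the statement. Substituting $Y_I=y_{|I|}$ yields exactly the claimed expression for $N(\cp_{n,S})$, and summing over $S$ gives the theorem. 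The main obstacle is the middle step: confirming that $\cp_{n,S}$, which a priori is specified only by its vertex set, genuinely coincides with $\cp_{n-1}(\mathbf{Y})$ under the parameter dictionary $Y_I=y_{|I|}$, and that the $y_i$ carry the correct signs and indexing from Postnikov's conventions. Once that identification is pinned down---essentially verifying that for a permutohedron $Y_I$ depends only on $|I|$ and equals the finite-difference coefficient of the generating vector---the remaining steps are a direct citation and routine bookkeeping.
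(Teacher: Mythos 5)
Your proposal follows essentially the same route as the paper: partition the lattice points of $\cp_n$ by coordinate sum, invoke the preceding proposition to identify each slice $\cp_{n,S}$ as the permutohedron on the permutations of $(1,\dots,1,r,k+2,\dots,n)$, realize it as Postnikov's generalized permutohedron $\cp_{n-1}(\mathbf{Y})$ with $Y_I=y_{|I|}$ given by the inverse binomial transform, and then cite \cite[Theorem 4.2]{P}. The identification step you flag as the main obstacle is exactly the step the paper also handles by direct appeal to \cite[Section 4]{P}, so your argument is correct and complete to the same standard as the paper's.
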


The numbers $N(\cp_n)$ for $1\leq n\leq 8$ are given by
$(1,3,17,144,1623,22804,383415,7501422)$. 

\section{Further questions}
What other properties of $\cp_n$ might be worth investigating? Here
are two possibilities.
\begin{enumerate}[label=(\alph*)]
  \item Because $\cp_n$ is a simple polytope
    (Proposition~\ref{prop:nedg}), its dual $\cp^*_n$ is
    simplicial. Thus $\cp^*_n$ has an $h$-vector
    $(h_0,h_1,\dots,h_n)$ which is a symmetric ($h_i=h_{n-i}$),
    unimodal sequence of positive integers satisfying
     $$ \sum_{i=0}^n h_i =
    n!\left(\frac{1}{1!}+\frac{1}{2!}+\dots+\frac{1}{n!}\right) , $$
    the number of facets of $\cp^*_n$ (or vertices of $\cp_n$) \cite{rs}.
    Is there a simple generating function, combinatorial formula,
    etc., for the numbers $h_i$?
  \item Is there a formula for the Ehrhart polynomial (e.g.,
    \cite[{\S}4.6.2]{ec1}) $i(\cp_n,m)$ generalizing
    Theorem~\ref{thm:ip} (the case $m=1$)?
\end{enumerate}

\end{document}